\journalname{Optim Lett}
\begin{document}
	
	
	\title{A note on approximate Karush--Kuhn--Tucker conditions in  locally Lipschitz multiobjective optimization	
	}
	
	\titlerunning{A note on AKKT conditions in locally Lipschitz multiobjective optimization}
	\author{Nguyen Van Tuyen$^{1,2}$ \and Jen-Chih Yao$^3$ \and
	Ching-Feng Wen$^{4,5}$	
	}
	\authorrunning{N.V. Tuyen et al.}
	
	\institute{
		\Letter\ \ Ching-Feng Wen
		 \\
		{cfwen@kmu.edu.tw}
		\\
		\\
		Nguyen Van Tuyen
		\\
		{tuyensp2@yahoo.com; nguyenvantuyen83@hpu2.edu.vn}
		\\
		\\
		Jen-Chih Yao
		\\
		{yaojc@mail.cmu.edu.tw}
		\\
		\\
		\at$^1$ School of Mathematical Sciences, University of Electronic Science and Technology of China, Chengdu, P.R. China 
		\and
		\at$^2$ Department of Mathematics, Hanoi Pedagogical University 2, Xuan Hoa, Phuc Yen, Vinh Phuc, Vietnam
		\and
		\at$^3$ Center for General Education, China Medical University, Taichung, 40402, Taiwan 
		\and
		\at$^4$ Center for Fundamental Science; and Research Center for Nonlinear Analysis and Optimization, Kaohsiung Medical University, Kaohsiung, 80708, Taiwan
		\and
		\at$^5$ Department of Medical Research, Kaohsiung Medical University Hospital, Kaohsiung, 80708, Taiwan
	}

	\date{Received: date / Accepted: date}
	\maketitle
	\begin{abstract}
		In the recent paper of Giorgi, Jim\'enez and Novo (J Optim Theory Appl 171:70--89, 2016), the authors introduced the so-called approximate Karush--Kuhn--Tucker (AKKT) condition for smooth multiobjective optimization problems  and obtained some AKKT-type  necessary optimality conditions and sufficient optimality conditions for weak efficient solutions of such a problem. In this note, we extend these optimality conditions to locally Lipschitz  multiobjective optimization problems using Mordukhovich subdifferentials. Furthermore, we prove that, under some suitable additional conditions, an AKKT condition is also a KKT one.  	
	\end{abstract}
	\keywords{Approximate optimality conditions \and Multiobjective optimization problems \and Locally Lipschitz functions \and  Mordukhovich subdifferential}
	\subclass{90C29 \and 90C46 \and 49J52}

\section{Introduction}

Karush--Kuhn--Tucker (KKT) optimality conditions are one of the most important results in optimization theory. However, KKT optimality conditions do not need to be fulfilled at local minimum points unless some constraint qualifications are satisfied; see, for example, \cite[p. 97]{Andreani16}, \cite[Section 3.1]{Birgin2014} and \cite[p. 78]{Mangasarian69}. In \cite{Andreani2010}, Andreani, Mart\'inez and Svaiter introduced the so-called complementary approximate Karush--Kuhn--Tucker (CAKKT) condition for scalar optimization problems with smooth data. Then, the authors proved that this condition is necessary for a point to be a local minimizer without any constraint qualification. Moreover, they also showed that  the augmented Lagrangian method with lower-level constraints introduced in \cite{Andreani2007} generates sequences converging to CAKKT points under certain conditions. Optimality conditions of  CAKKT-type have been recognized to be useful in designing algorithms for finding approximate solutions of optimization problems; see, for example, \cite{Andreani2011,Birgin2014,Dutta2013,Haeser11,Haeser2015}.

\medskip
Recently, Giorgi, Jim\'enez and Novo \cite{Giorgi16} extended the results in \cite{Andreani2010} to multiobjective problems with continuously differentiable data.  The authors proposed the so-called approximate Karush--Kuhn--Tucker (AKKT) condition for multiobjective optimization problems. Then, they proved that the AKKT condition holds for local weak efficient solutions without any additional requirement.  Under the convexity of the related functions, an AKKT-type sufficient condition for global weak efficient solutions is also established.

\medskip
An interesting question arises: {\em How does one obtain AKKT-type optimality conditions  for weak efficient solutions of locally Lipschitz multiobjective optimization problems?}  This paper is aimed at solving the problem. We hope that our results will be useful in finding approximate efficient solutions of nonsmooth multiobjective optimization problems.

\medskip
The paper is organized as follows.  In Section \ref{Preliminaries}, we recall some basic  definitions and preliminaries from variational analysis, which are widely used in the  sequel.  Section \ref{Main results}  is devoted to presenting the main results.

\section{Preliminaries} \label{Preliminaries}
We use the following notation and terminology. Fix $n \in {\mathbb{N}}:=\{1, 2, \ldots\}$. The space $\mathbb{R}^n$ is equipped with the usual scalar product and  Euclidean norm. The topological closure and the topological interior of a subset  $S$ of $\mathbb{R}^n$ are denoted, respectively, by  $\mathrm{cl}\,{S}$ and $\mathrm{int}\,{S}$. The  closed unit ball of $\mathbb{R}^n$ is denoted by $\mathbb{B}^n.$

\begin{definition}{\rm (See \cite{mor06}) Given $\bar x\in  \mbox{cl}\,S$. The set
		\begin{equation*}
		N(\bar x; S):=\{z^*\in \mathbb{R}^n:\exists
		x^k\stackrel{S}\longrightarrow \bar x, \varepsilon_k\to 0^+, z^*_k\to z^*,
		z^*_k\in {\widehat N_{\varepsilon
				_k}}(x^k; S),\ \ \forall k \in\mathbb{N}\},
		\end{equation*}
is called the {\em Mordukhovich/limiting normal cone}  of $S$ at $\bar x$, where
		\begin{equation*}
		\Hat N_\varepsilon  (x; S):= \bigg\{ {z^*  \in {\mathbb{R}^n} \;:\;\limsup_{u\overset{S} \rightarrow x}
			\frac{{\langle z^* , u - x\rangle }}{{\parallel u - x\parallel }} \leqq \varepsilon } \bigg\}
		\end{equation*}
		is the set of  {\em $\varepsilon$-normals} of $S$  at $x$ and  $u\xrightarrow {{S}} x$ means that $u \rightarrow x$ and $u \in S$.
	}
\end{definition}

\medskip
Let  $\varphi \colon \mathbb{R}^n \to  \overline{\mathbb{R}}$ be an {\em extended-real-valued function}. The {\em  epigraph}  and  {\em domain} of $\varphi$ are denoted, respectively, by
\begin{align*}
\mbox{epi }\varphi&:=\{(x, \alpha)\in\mathbb{R}^n\times\mathbb{R} \,:\,  \alpha\geqq \varphi(x) \},
\\
\mbox{dom }\varphi &:= \{x\in \mathbb{R}^n \,:\, \ \ |\varphi(x)|<+\infty \}.
\end{align*}

\begin{definition}{\rm  (See \cite{mor06})
		Let $\bar x\in \mbox{dom }\varphi$. The set
		\begin{align*}
		\partial \varphi (\bar x):=\{x^*\in \mathbb{R}^n \,:\, (x^*, -1)\in N((\bar x, \varphi (\bar x)); \mbox{epi }\varphi )\},
		\end{align*}
		is called the {\it Mordukhovich/limiting subdifferential}  of $\varphi$ at $\bar x$. If $\bar x\notin \mbox{dom }\varphi$, then we put $\partial \varphi (\bar x)=\emptyset$.
	}
\end{definition}

\medskip
Recall that $\varphi : \mathbb{R}^n\to\mathbb{R}^m$ is {\em strictly differentiable} at $\bar x$  iff there is a linear continuous operator $\nabla\varphi(\bar x) : \mathbb{R}^n\to \mathbb{R}^m$, called the {\em Fr\'echet derivative} of $\varphi$ at $\bar x$, such that
\begin{equation*}
\lim_{\substack{x\to \bar x\\ u\to \bar x}}\dfrac{\varphi(x)-\varphi(u)-\nabla \varphi(\bar x)( x-u)}{\|x-u\|}=0.
\end{equation*}
As is well-known, any function $\varphi$ that is continuously differentiable in a neighborhood of $\bar x$ is strictly differentiable at $\bar x$. We now summarize some properties of  the Mordukhovich subdifferential that will be used in the next section.

\begin{proposition}\label{nonnegative-scaling}{\rm (See \cite[Proposition 6.17(d)]{Penot13})} Let $\varphi\colon\mathbb{R}^n\to\overline{\mathbb{R}}$ be lower semicontinuous around $\bar x$. Then, for all $\lambda\geqq 0$, one has $\partial (\lambda\varphi)(\bar x)=\lambda \partial \varphi(\bar x)$.
\end{proposition}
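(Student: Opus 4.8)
The plan is to reduce everything to a single linear change of variables in $\mathbb{R}^n\times\mathbb{R}$ and then exploit the fact that the limiting normal cone is a cone. The case $\lambda=0$ is degenerate and I would dispose of it by the standard convention, under which both sides collapse to $\{0\}$; the substance of the statement is the case $\lambda>0$, which I treat from now on. Note that for $\lambda>0$ the function $\lambda\varphi$ is again lower semicontinuous around $\bar x$, so the normal-cone machinery applies to $\mbox{epi}(\lambda\varphi)$.

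The key geometric observation is that, for $\lambda>0$,
\begin{equation*}
\mbox{epi}(\lambda\varphi)=T_\lambda\big(\mbox{epi}\,\varphi\big),\qquad T_\lambda(x,\alpha):=(x,\lambda\alpha),
\end{equation*}
since $\alpha\geqq\lambda\varphi(x)$ is equivalent to $(x,\alpha/\lambda)\in\mbox{epi}\,\varphi$. Here $T_\lambda$ is a self-adjoint linear isomorphism of $\mathbb{R}^n\times\mathbb{R}$ with $T_\lambda^{-1}(x^*,\beta^*)=(x^*,\beta^*/\lambda)$, and it maps the base point $(\bar x,\varphi(\bar x))$ to $(\bar x,(\lambda\varphi)(\bar x))$. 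Hence everything hinges on the transformation rule
\begin{equation*}
N\big((\bar x,(\lambda\varphi)(\bar x));\mbox{epi}(\lambda\varphi)\big)=T_\lambda^{-1}\,N\big((\bar x,\varphi(\bar x));\mbox{epi}\,\varphi\big).
\end{equation*}

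To establish this rule I would work at the level of the $\varepsilon$-normals. Writing $x'=T_\lambda x$, $u'=T_\lambda u$ and $S'=T_\lambda S$, a direct computation gives $\langle z^*,u'-x'\rangle=\langle T_\lambda z^*,u-x\rangle$, so the numerators transform exactly through the self-adjoint map $T_\lambda$. The delicate point — and the step I expect to be the main obstacle — is that $T_\lambda$ is not an isometry, so the denominators $\|u'-x'\|$ and $\|u-x\|$ differ; their ratio $\|u-x\|/\|u'-x'\|$, however, stays between two positive constants $c_1\leqq c_2$ because $T_\lambda$ is a linear isomorphism. Feeding this bounded distortion into the defining $\limsup$ inequality shows that $T_\lambda z^*\in\widehat N_\varepsilon(u;S)$ forces $z^*\in\widehat N_{c_2\varepsilon}(u';S')$: the extra factor can only weaken the inequality when the quotient $\langle T_\lambda z^*,u-x\rangle/\|u-x\|$ is nonnegative, which is already controlled by $\varepsilon$. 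Because the limiting construction only requires a sequence $\varepsilon_k\to 0^+$, the harmless rescaling $\varepsilon_k\mapsto c_2\varepsilon_k\to 0^+$ does not affect the limit, and passing to the limit along $x^k\to\bar x$ yields the displayed transformation rule (the reverse inclusion follows by the same argument applied to $T_\lambda^{-1}$).

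Finally I would combine these ingredients through the definition of the subdifferential and the homogeneity of the cone $N$. Indeed,
\begin{align*}
x^*\in\partial(\lambda\varphi)(\bar x)
&\iff (x^*,-1)\in N\big((\bar x,(\lambda\varphi)(\bar x));\mbox{epi}(\lambda\varphi)\big)\\
&\iff T_\lambda(x^*,-1)=(x^*,-\lambda)\in N\big((\bar x,\varphi(\bar x));\mbox{epi}\,\varphi\big)\\
&\iff (x^*/\lambda,-1)\in N\big((\bar x,\varphi(\bar x));\mbox{epi}\,\varphi\big)\\
&\iff x^*/\lambda\in\partial\varphi(\bar x)\iff x^*\in\lambda\,\partial\varphi(\bar x),
\end{align*}
where the third equivalence uses that $N$ is a cone, hence invariant under multiplication by $1/\lambda>0$, and the last two unwind the definition of $\partial\varphi(\bar x)$. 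This proves $\partial(\lambda\varphi)(\bar x)=\lambda\,\partial\varphi(\bar x)$ for $\lambda>0$, completing the argument.
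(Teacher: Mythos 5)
Your argument is essentially correct for $\lambda>0$, but note that the paper contains no proof of this proposition at all: it is quoted as a known calculus rule from Penot's book (Proposition 6.17(d)), so there is no internal argument to compare against. What you have produced is a self-contained derivation from the two definitions the paper does record ($\varepsilon$-normals, limiting normal cone, and the subdifferential via the epigraph), and it holds together: the identification $\mathrm{epi}(\lambda\varphi)=T_\lambda(\mathrm{epi}\,\varphi)$ is exact; the transfer of $\varepsilon$-normals through the self-adjoint isomorphism $T_\lambda$ is handled correctly, since the distortion ratio $\|u-x\|/\|T_\lambda(u-x)\|$ is bounded by your $c_2$, negative quotients stay negative, nonnegative ones pick up at most the factor $c_2$, and the rescaling $\varepsilon_k\mapsto c_2\varepsilon_k\to 0^+$ is invisible to the limiting construction; finally, the cone property of $N$ lets you trade $(x^*,-\lambda)$ for $(x^*/\lambda,-1)$ in both directions, so every equivalence in your closing chain is a genuine equivalence. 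This is arguably more instructive than the citation, since it uses nothing beyond the paper's own preliminaries.

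One caveat: your dismissal of $\lambda=0$ ``by the standard convention'' hides a real subtlety. For a merely lower semicontinuous $\varphi$ the set $\partial\varphi(\bar x)$ can be empty (take $\varphi(x)=-\sqrt{|x|}$ at $\bar x=0$), in which case the literal Minkowski product $0\cdot\partial\varphi(\bar x)$ is empty while $\partial(0\cdot\varphi)(\bar x)=\partial 0(\bar x)=\{0\}$; so for $\lambda=0$ the stated equality requires either the convention $0\cdot\emptyset=\{0\}$ or nonemptiness of $\partial\varphi(\bar x)$. The latter is automatic in the locally Lipschitz setting in which the paper actually invokes the proposition (by its Proposition 2), so nothing downstream is affected, but as a free-standing proof of the statement as written you should flag this rather than wave at a convention.
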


\begin{proposition}\label{compactness}{\rm (See \cite[Corollary 1.81]{mor06})} If $\varphi\colon\mathbb{R}^n\to\overline{\mathbb{R}}$ is Lipschitz continuous
	around $\bar x$ with modulus $L>0$, then $\partial \varphi(\bar x)$ is a nonempty compact set in $\mathbb{R}^n$ and contained in $L\mathbb{B}^n$.	
\end{proposition}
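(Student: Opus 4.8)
The plan is to prove the three assertions separately: the inclusion $\partial\varphi(\bar x)\subseteq L\mathbb{B}^n$, the closedness of $\partial\varphi(\bar x)$, and its nonemptiness. I would work through the Fr\'echet (regular) subdifferential
\begin{equation*}
\hat\partial\varphi(x):=\Big\{x^*\in\mathbb{R}^n:\liminf_{u\to x}\frac{\varphi(u)-\varphi(x)-\langle x^*,u-x\rangle}{\|u-x\|}\geqq 0\Big\},
\end{equation*}
together with the standard representation of the limiting subdifferential as an outer limit, $\partial\varphi(\bar x)=\limsup_{x\to\bar x}\hat\partial\varphi(x)$ (the constraint $\varphi(x)\to\varphi(\bar x)$ being automatic here by continuity). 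Unwinding the epigraphical definition given above yields exactly this, since a limiting normal to $\mbox{epi }\varphi$ with negative last coordinate corresponds to a regular subgradient at a nearby point.

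For the bound, I would first show $\hat\partial\varphi(x)\subseteq L\mathbb{B}^n$ for every $x$ in the Lipschitz neighborhood. Fixing $x^*\in\hat\partial\varphi(x)$ and a unit vector $d$, and testing the $\liminf$ inequality along $u=x+td$ with $t\downarrow 0$, one gets $\langle x^*,d\rangle\leqq \limsup_{t\downarrow 0}\frac{\varphi(x+td)-\varphi(x)}{t}\leqq L$, using $|\varphi(x+td)-\varphi(x)|\leqq Lt$; taking the supremum over $d$ gives $\|x^*\|\leqq L$. Passing to the outer limit and using that $L\mathbb{B}^n$ is closed then yields $\partial\varphi(\bar x)\subseteq L\mathbb{B}^n$.

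Closedness is essentially built into the construction: the limiting subdifferential is an outer limit of sets, hence closed by a routine diagonal argument on the defining sequences $x^k\to\bar x$, $\varepsilon_k\to 0^+$, $z^*_k\to z^*$. Together with the boundedness just established, this makes $\partial\varphi(\bar x)$ compact in $\mathbb{R}^n$.

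The main obstacle is nonemptiness, which cannot follow from formal manipulation of the definition alone. Here I would invoke the finite-dimensional structure: by Rademacher's theorem $\varphi$ is differentiable almost everywhere on the Lipschitz neighborhood, and at each such point $\nabla\varphi(x)\in\hat\partial\varphi(x)$ with $\|\nabla\varphi(x)\|\leqq L$. Choosing any sequence $x_k\to\bar x$ of differentiability points, the gradients $\nabla\varphi(x_k)$ all lie in the compact set $L\mathbb{B}^n$, so a subsequence converges to some $x^*$; by the outer-limit representation $x^*\in\partial\varphi(\bar x)$, proving it is nonempty. (Alternatively, one may use the density of regular subgradients of a lower semicontinuous function supplied by the variational/fuzzy subdifferential principles.)
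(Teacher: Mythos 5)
The paper offers no proof of this proposition at all: it is imported verbatim from \cite[Corollary 1.81]{mor06}, so the only meaningful comparison is with the argument in that reference. Your proof is correct, and it is the standard self-contained finite-dimensional argument: the directional test giving $\|x^*\|\leqq L$ for every Fr\'echet subgradient at points in the Lipschitz neighborhood, closedness of outer limits (so boundedness yields compactness), and Rademacher's theorem combined with compactness of $L\mathbb{B}^n$ for nonemptiness are all sound. The one step you should not present as a mere ``unwinding'' is the identity $\partial\varphi(\bar x)=\limsup_{x\to\bar x}\hat\partial\varphi(x)$: passing from the paper's definition (limits of $\varepsilon$-normals to $\mathrm{epi}\,\varphi$) to limits of Fr\'echet subgradients at nearby points requires both the finite-dimensional fact that one may take $\varepsilon_k\equiv 0$ and the correspondence between epigraphical Fr\'echet normals with negative last component and Fr\'echet subgradients; this equivalence is a known but nontrivial theorem (the limiting representation of the subdifferential in \cite{mor06}, or Theorem 8.9 in Rockafellar and Wets, \emph{Variational Analysis}), and it deserves a citation rather than a parenthetical remark. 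As for what each route buys: Mordukhovich's Corollary 1.81 is proved in an infinite-dimensional framework where Rademacher's theorem is unavailable and nonemptiness rests on variational/extremal principles (your own parenthetical alternative), whereas your argument exploits finite dimensionality and is considerably more elementary --- entirely adequate here, since the paper works in $\mathbb{R}^n$ throughout.
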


\begin{proposition}\label{sum-rule}{\rm (See \cite[Theorem 3.36]{mor06})} Let $\varphi_l\colon\mathbb{R}^n\to\overline{\mathbb{R}}$, $l=1, \ldots, p$, $p\geqq 2$, be lower semicontinuous around $\bar x$ and let all but one of these
	functions be locally Lipschitz around $\bar x$. Then we have the following inclusion
	\begin{equation*}
	\partial (\varphi_1+\ldots+\varphi_p) (\bar x)\subset \partial  \varphi_1 (\bar x) +\ldots+\partial \varphi_p (\bar x).
	\end{equation*}
\end{proposition}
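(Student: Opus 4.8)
\noindent
The plan is to obtain this inclusion from two more basic ingredients: the \emph{limiting representation} of $\partial\varphi$ as an outer limit of Fr\'echet subdifferentials, and the \emph{fuzzy sum rule} for Fr\'echet subdifferentials. Write $\widehat N(x;S):=\widehat N_0(x;S)$ for the Fr\'echet normal cone and $\widehat\partial\varphi(x):=\{x^*\in\mathbb{R}^n:(x^*,-1)\in\widehat N((x,\varphi(x));\mathrm{epi}\,\varphi)\}$ for the Fr\'echet subdifferential. Unwinding the definitions of $N$ and $\partial$ through the epigraph, and using the finite-dimensional fact that the parameters $\varepsilon_k$ in the definition of the normal cone may be taken to be zero, one obtains the representation: $x^*\in\partial\varphi(\bar x)$ if and only if there exist $x^k\to\bar x$ with $\varphi(x^k)\to\varphi(\bar x)$ and $x^*_k\to x^*$ with $x^*_k\in\widehat\partial\varphi(x^k)$.

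First I would reduce to the two-function case $p=2$ by induction on $p$. After relabelling, assume $\varphi_2,\dots,\varphi_p$ are locally Lipschitz around $\bar x$ while $\varphi_1$ is merely lower semicontinuous, and set $\psi:=\varphi_2+\dots+\varphi_p$, which is again locally Lipschitz. The two-function rule applied to $\varphi_1+\psi$ gives $\partial(\varphi_1+\psi)(\bar x)\subset\partial\varphi_1(\bar x)+\partial\psi(\bar x)$, while the induction hypothesis applied to the $p-1$ Lipschitz functions $\varphi_2,\dots,\varphi_p$ gives $\partial\psi(\bar x)\subset\partial\varphi_2(\bar x)+\dots+\partial\varphi_p(\bar x)$; combining the two inclusions settles the inductive step.

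The core is therefore to prove $\partial(\varphi_1+\varphi_2)(\bar x)\subset\partial\varphi_1(\bar x)+\partial\varphi_2(\bar x)$ when $\varphi_2$ is locally Lipschitz with modulus $L$ and $\varphi_1$ is lower semicontinuous around $\bar x$. Fix $x^*\in\partial(\varphi_1+\varphi_2)(\bar x)$. By the limiting representation there are $x^k\to\bar x$ with $(\varphi_1+\varphi_2)(x^k)\to(\varphi_1+\varphi_2)(\bar x)$ and $x^*_k\to x^*$, $x^*_k\in\widehat\partial(\varphi_1+\varphi_2)(x^k)$; since $\varphi_2$ is continuous, it follows that $\varphi_1(x^k)\to\varphi_1(\bar x)$ and $\varphi_2(x^k)\to\varphi_2(\bar x)$ separately. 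Applying the fuzzy sum rule at each $x^k$ with tolerance $1/k$ produces points $u^k,v^k\to\bar x$ with $\varphi_1(u^k)\to\varphi_1(\bar x)$, $\varphi_2(v^k)\to\varphi_2(\bar x)$ and subgradients $u^*_k\in\widehat\partial\varphi_1(u^k)$, $v^*_k\in\widehat\partial\varphi_2(v^k)$ such that $\|x^*_k-u^*_k-v^*_k\|\le 1/k$. Since $\varphi_2$ is Lipschitz near $\bar x$, the Fr\'echet subgradients satisfy $v^*_k\in\widehat\partial\varphi_2(v^k)\subset\partial\varphi_2(v^k)\subset L\mathbb{B}^n$ by Proposition~\ref{compactness}, so after passing to a subsequence $v^*_k\to v^*$, whence $u^*_k\to u^*:=x^*-v^*$. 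Passing to the limit in each component and invoking the limiting representation once more yields $v^*\in\partial\varphi_2(\bar x)$ and $u^*\in\partial\varphi_1(\bar x)$, hence $x^*=u^*+v^*\in\partial\varphi_1(\bar x)+\partial\varphi_2(\bar x)$.

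The step I expect to be the main obstacle is the fuzzy sum rule for Fr\'echet subdifferentials, which carries the genuine analytic content: in the finite-dimensional setting it is usually derived either from the extremal principle applied to the epigraphs of $\varphi_1$ and $\varphi_2$, or from Ekeland's variational principle combined with a smooth penalization that decouples the two summands. The boundedness furnished by the Lipschitz hypothesis on $\varphi_2$ (Proposition~\ref{compactness}) is precisely what legitimises the extraction of a convergent subsequence $v^*_k\to v^*$; without a qualification of this type the outer-limit passage can break down, which is why the assumption that all but one of the $\varphi_l$ be locally Lipschitz cannot be omitted.
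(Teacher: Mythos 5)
The paper does not prove this proposition at all: it is quoted as a preliminary directly from Mordukhovich's book (\cite[Theorem~3.36]{mor06}), so there is no internal proof to compare against. Your argument is essentially the standard proof of that cited theorem, specialized to finite dimensions --- the limiting representation $\partial\varphi(\bar x)=\limsup_{x\to\bar x,\;\varphi(x)\to\varphi(\bar x)}\widehat\partial\varphi(x)$, the fuzzy sum rule applied along the approximating sequence with tolerances $1/k$, and a diagonal passage to the limit in which the Lipschitz bound (your appeal to Proposition~\ref{compactness}) supplies the compactness needed to extract $v^*_k\to v^*$ --- and both the induction reducing to $p=2$ and the two-function limiting argument are correct. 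The only caveat is that the fuzzy sum rule is imported without proof; it is itself a nontrivial standard result (Theorem~2.33 in the same reference), and you flag this dependence honestly, so the proposal is a sound reduction rather than a complete self-contained proof.
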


\begin{proposition}\label{max-rule}{\rm (See \cite[Theorem 3.46]{mor06})}
	Let $\varphi_l\colon\mathbb{R}^n\to\overline{\mathbb{R}}$, $l=1, \ldots, p$,  be  locally Lipschitz around $\bar x$. Then the function
	$ \phi(\cdot):=\max\{\varphi_l(\cdot):l=1, \ldots, p\}$
	is also locally Lipschitz around $\bar x$ and one has
	\begin{equation*}
	\partial \phi(\bar x)\subset \bigcup\bigg\{\partial\bigg(\sum_{l=1}^{p}\lambda_l\varphi_l\bigg)(\bar x)\;:\; (\lambda_1, \ldots, \lambda_p)\in\Lambda(\bar x)\bigg\},
	\end{equation*}
	where $\Lambda(\bar x):=\big\{(\lambda_1, \ldots, \lambda_p)\;:\; \lambda_l\geqq 0, \sum_{l=1}^{p}\lambda_l=1, \lambda_l[\varphi_l(\bar x)-\phi(\bar x)]=0\big\}.$
\end{proposition}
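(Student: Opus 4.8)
The plan is to realize $\phi$ as a composition and then invoke a chain rule for the limiting subdifferential. Introduce the vector map $\Phi\colon\mathbb{R}^n\to\mathbb{R}^p$, $\Phi(x):=(\varphi_1(x),\ldots,\varphi_p(x))$, and the scalar max function $\vartheta\colon\mathbb{R}^p\to\mathbb{R}$, $\vartheta(y):=\max\{y_1,\ldots,y_p\}$, so that $\phi=\vartheta\circ\Phi$. First I would record the local Lipschitz continuity of $\phi$: each $\varphi_l$ is Lipschitz around $\bar x$, hence $\Phi$ is Lipschitz around $\bar x$, while $\vartheta$ is globally Lipschitz (indeed $1$-Lipschitz in the Euclidean norm, being a finite maximum of the $1$-Lipschitz coordinate projections); the composition of Lipschitz maps is Lipschitz, which gives the first assertion.

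Next I would compute $\partial\vartheta$ at $\bar y:=\Phi(\bar x)$. Since $\vartheta$ is convex and finite, its limiting subdifferential agrees with the subdifferential of convex analysis, and the latter is the classical formula $\partial\vartheta(\bar y)=\mathrm{conv}\{e_l : l\in I(\bar y)\}$, where $e_l$ is the $l$-th unit vector and $I(\bar y):=\{l : y_l=\vartheta(\bar y)\}$ is the set of active indices. Rewriting a convex combination supported on $I(\bar y)$ as a vector $(\lambda_1,\ldots,\lambda_p)$ with $\lambda_l\geqq 0$, $\sum_{l=1}^p\lambda_l=1$ and $\lambda_l=0$ for $l\notin I(\bar y)$, and noting that $l\notin I(\bar y)$ precisely when $\varphi_l(\bar x)<\phi(\bar x)$, I obtain $\partial\vartheta(\bar y)=\Lambda(\bar x)$ exactly as in the statement.

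Finally I would feed these two ingredients into the chain rule for the limiting subdifferential of a composition with a Lipschitzian inner mapping, namely $\partial(\vartheta\circ\Phi)(\bar x)\subset\bigcup\{\partial\langle\lambda,\Phi\rangle(\bar x) : \lambda\in\partial\vartheta(\bar y)\}$. Since $\langle\lambda,\Phi\rangle(\cdot)=\sum_{l=1}^p\lambda_l\varphi_l(\cdot)$ and $\partial\vartheta(\bar y)=\Lambda(\bar x)$, this is precisely the desired inclusion. The step I expect to be the main obstacle is justifying this chain rule, whose general validity hinges on a qualification condition phrased through the singular subdifferential $\partial^\infty\vartheta(\bar y)$; here the difficulty evaporates because $\vartheta$ is locally Lipschitz, which forces $\partial^\infty\vartheta(\bar y)=\{0\}$ and makes the qualification condition hold automatically. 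An alternative, more hands-on route writes $\mathrm{epi}\,\phi=\bigcap_{l=1}^p\mathrm{epi}\,\varphi_l$ and applies the intersection rule for normal cones; however, this yields only the weaker bound $\partial\phi(\bar x)\subset\bigcup_{\lambda\in\Lambda(\bar x)}\sum_{l=1}^p\lambda_l\partial\varphi_l(\bar x)$, since it decouples the summands, whereas the chain rule retains the joint subdifferential $\partial(\sum_{l=1}^p\lambda_l\varphi_l)(\bar x)$ demanded by the statement.
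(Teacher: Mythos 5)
Your proof is correct. The paper gives no proof of this proposition at all---it is quoted verbatim from Mordukhovich's book (Theorem 3.46)---and your argument (realizing $\phi$ as $\vartheta\circ\Phi$, identifying $\partial\vartheta(\Phi(\bar x))$ with $\Lambda(\bar x)$ via convex analysis, and then applying the Lipschitzian chain rule, i.e., Proposition~\ref{chain-rule}) is precisely the standard derivation used in that reference, with the qualification condition indeed automatic because $\vartheta$ and $\Phi$ are locally Lipschitz.
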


\begin{proposition}\label{chain-rule}{\rm (See \cite[Theorem 3.41]{mor06})}  Let $g\colon \mathbb{R}^n\to\mathbb{R}^m$ be locally Lipschitz around $\bar x$ and $\varphi\colon\mathbb{R}^m\to \mathbb{R}$ be locally Lipschitz around $g(\bar x)$. Then one has
	\begin{equation*}
	\partial (\varphi\circ g)(\bar x)\subset \bigcup_{y\in \partial \varphi(g(\bar x))} \partial \langle y, g\rangle (\bar x).
	\end{equation*}
	In particular, if $m=1$ and $\varphi$ is strictly differentiable at $g(\bar x)$, then
	\begin{equation*}
	\partial (\varphi\circ g)(\bar x)\subset \partial(\nabla \varphi(g(\bar x))g)(\bar x).
	\end{equation*}	
\end{proposition}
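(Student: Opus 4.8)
The plan is to derive the inclusion from the very definition of the limiting subdifferential together with a fuzzy calculus estimate for Fréchet subgradients, and then to read off the special case by specialization. First I would fix $x^*\in\partial(\varphi\circ g)(\bar x)$ and unwind the definition: since $\varphi\circ g$ is locally Lipschitz around $\bar x$ (being a composition of locally Lipschitz maps), in finite dimensions the limiting subdifferential is the sequential outer limit of Fréchet subdifferentials at nearby points, so there are sequences $x^k\to\bar x$ and $x^*_k\to x^*$ with $x^*_k\in\widehat\partial(\varphi\circ g)(x^k)$. Lipschitz continuity is what makes this limiting passage feasible, because by Proposition \ref{compactness} all the subdifferentials entering the argument are nonempty, compact, and uniformly bounded.

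The heart of the proof is a fuzzy chain rule at the Fréchet level: for each $k$ and a tolerance $\eta_k\downarrow 0$ one produces a point $u^k$ near $g(x^k)$, an intermediate subgradient $y^k\in\widehat\partial\varphi(u^k)$, and a vector in $\widehat\partial\langle y^k,g\rangle(x^k)$ lying within $\eta_k$ of $x^*_k$. Because $g$ is continuous one has $u^k\to g(\bar x)$, and because $\varphi$ is Lipschitz, Proposition \ref{compactness} keeps the multipliers $\{y^k\}$ bounded, so that along a subsequence $y^k\to y$. Passing to the limit and invoking the robustness (outer semicontinuity) of the limiting subdifferential then yields $y\in\partial\varphi(g(\bar x))$ together with $x^*\in\partial\langle y,g\rangle(\bar x)$, which is exactly the claimed inclusion. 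The main obstacle is precisely establishing this fuzzy rule and controlling the intermediate multiplier $y^k$ so that its limit lands in $\partial\varphi(g(\bar x))$ rather than in the subdifferential at some drifting point; the uniform Lipschitz bound of Proposition \ref{compactness} is the ingredient that tames this.

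An alternative and arguably cleaner route is to write $\varphi\circ g$ as the marginal function $x\mapsto\inf\{\varphi(y):y=g(x)\}$ over the graph of $g$, apply the standard subdifferential estimate for marginal functions, and use the scalarization identity $D^*g(\bar x)(y^*)=\partial\langle y^*,g\rangle(\bar x)$ for the coderivative of the single-valued Lipschitz map $g$; this reduces the claim to two known calculus results. Finally, the ``in particular'' statement is immediate: when $m=1$ and $\varphi$ is strictly differentiable at $g(\bar x)$, the set $\partial\varphi(g(\bar x))$ collapses to the singleton $\{\nabla\varphi(g(\bar x))\}$, the union over $y$ disappears, and $\langle y,g\rangle$ becomes $\nabla\varphi(g(\bar x))g$, so the first inclusion reduces to $\partial(\varphi\circ g)(\bar x)\subset\partial(\nabla\varphi(g(\bar x))g)(\bar x)$.
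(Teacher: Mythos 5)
The paper does not prove this proposition at all: it is a preliminary quoted verbatim from Mordukhovich's book (Theorem 3.41 of \cite{mor06}), so your attempt can only be measured against the textbook argument. Your ``alternative and arguably cleaner route'' is in substance that argument: Mordukhovich obtains the inclusion by combining a chain rule for limiting subdifferentials/coderivatives (whose qualification condition is automatic here because $\varphi$ and $g$ are locally Lipschitz) with the scalarization formula $D^*g(\bar x)(y)=\partial\langle y,g\rangle(\bar x)$ for single-valued Lipschitz maps. Your handling of the ``in particular'' part is also correct: strict differentiability of $\varphi$ at $g(\bar x)$ gives $\partial\varphi(g(\bar x))=\{\nabla\varphi(g(\bar x))\}$, so the union collapses to the single set $\partial(\nabla\varphi(g(\bar x))g)(\bar x)$.

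Your primary route (fuzzy Fr\'echet chain rule plus a limiting passage) is a viable alternative strategy, but as written it contains one genuine gap beyond the fuzzy rule itself, which you name but do not prove. In the last step you appeal to closedness of the subdifferential mapping (Proposition \ref{robustness}) to conclude both $y\in\partial\varphi(g(\bar x))$ and $x^*\in\partial\langle y,g\rangle(\bar x)$. The first conclusion is sound, since every $y^k$ is a Fr\'echet subgradient of the one fixed function $\varphi$. The second is not: your approximating vectors lie in $\widehat\partial\langle y^k,g\rangle(\cdot)$, i.e.\ they are subgradients of functions that \emph{change with} $k$, and outer semicontinuity of $\partial\langle y,g\rangle$ for the fixed limit $y$ says nothing about such a sequence. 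The repair is standard but must be stated: write $\langle y^k,g\rangle=\langle y,g\rangle+\langle y^k-y,g\rangle$ and observe that the perturbation is Lipschitz with modulus $L_g\|y^k-y\|\to 0$, where $L_g$ is a Lipschitz constant of $g$ near $\bar x$; hence each approximating vector is an $\varepsilon_k$-Fr\'echet subgradient of the fixed function $\langle y,g\rangle$ with $\varepsilon_k\to 0$, and the $\varepsilon$-normal based construction of the limiting objects (as in the paper's definition of the normal cone) then delivers $x^*\in\partial\langle y,g\rangle(\bar x)$. A smaller imprecision: fuzzy rules place the subgradient of $\langle y^k,g\rangle$ at some point \emph{near} $x^k$, not at $x^k$ itself; this is harmless because those points still converge to $\bar x$, but the proof should say so.
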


\begin{proposition}\label{Fermat-rule}{\rm (See \cite[Proposition 1.114]{mor06})} Let   $\varphi\colon\mathbb{R}^n\to\overline{\mathbb{R}}$  be finite at $\bar x$. If $\varphi$ has a local minimum at $\bar x$, then $ 0\in\partial\varphi(\bar x).$
\end{proposition}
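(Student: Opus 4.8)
The plan is to unwind the definition of the Mordukhovich subdifferential and reduce the claim $0\in\partial\varphi(\bar x)$ to showing that $(0,-1)$ is a regular (Fr\'echet) normal to $\operatorname{epi}\varphi$ at the point $(\bar x,\varphi(\bar x))$. By definition, $0\in\partial\varphi(\bar x)$ means exactly that $(0,-1)\in N((\bar x,\varphi(\bar x));\operatorname{epi}\varphi)$. Since the regular normal cone $\widehat N_0$ is always contained in the limiting normal cone $N$ (take the constant sequence $x^k\equiv(\bar x,\varphi(\bar x))$ together with any $\varepsilon_k\to 0^+$ and $z^*_k\equiv(0,-1)$, using the monotonicity $\widehat N_0\subseteq\widehat N_{\varepsilon_k}$), it suffices to prove that $(0,-1)\in\widehat N_0((\bar x,\varphi(\bar x));\operatorname{epi}\varphi)$.

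First I would compute the defining $\limsup$ directly. For a point $(u,\alpha)\in\operatorname{epi}\varphi$, the numerator of the quotient equals $\langle(0,-1),(u-\bar x,\alpha-\varphi(\bar x))\rangle=\varphi(\bar x)-\alpha$. Because $(u,\alpha)\in\operatorname{epi}\varphi$ we have $\alpha\geqq\varphi(u)$, and because $\bar x$ is a local minimizer we have $\varphi(u)\geqq\varphi(\bar x)$ for all $u$ in some neighborhood of $\bar x$; combining these gives $\alpha\geqq\varphi(\bar x)$, so the numerator $\varphi(\bar x)-\alpha$ is nonpositive. The denominator $\|(u,\alpha)-(\bar x,\varphi(\bar x))\|$ is strictly positive along the approach, so every quotient is $\leqq 0$, whence the $\limsup$ is $\leqq 0\leqq\varepsilon$ for every $\varepsilon\geqq 0$. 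This yields $(0,-1)\in\widehat N_0((\bar x,\varphi(\bar x));\operatorname{epi}\varphi)$ and, by the inclusion noted above, completes the argument.

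There is no substantial obstacle here, as this is the generalized Fermat rule and the estimate is elementary; the only points requiring care are bookkeeping in nature. The $\limsup$ is taken over $(u,\alpha)$ tending to $(\bar x,\varphi(\bar x))$ \emph{within} $\operatorname{epi}\varphi$, so it is governed entirely by the behaviour on an arbitrarily small neighborhood, and hence the \emph{local} inequality $\varphi(u)\geqq\varphi(\bar x)$ is precisely what is needed. The hypothesis that $\varphi$ is finite at $\bar x$ guarantees that $(\bar x,\varphi(\bar x))$ is a genuine point of $\operatorname{epi}\varphi$, so that the normal cone is defined there. If one preferred to bypass the $\varepsilon$-normal computation, an equivalent route is to observe that the zero function minorizes $\varphi-\varphi(\bar x)$ on a neighborhood of $\bar x$ with equality at $\bar x$, which places $0$ in the regular subdifferential and therefore in the limiting subdifferential.
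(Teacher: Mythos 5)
Your proposal is correct. Note that the paper does not prove this proposition at all---it is quoted verbatim from Mordukhovich's monograph (Proposition 1.114 there)---so there is no in-paper argument to compare against; what you have supplied is essentially the standard textbook proof. The reduction of $0\in\partial\varphi(\bar x)$ to the statement $(0,-1)\in\widehat N_0((\bar x,\varphi(\bar x));\operatorname{epi}\varphi)$ is exactly what the paper's definitions demand, the inclusion of regular normals into limiting normals via the constant sequence $x^k\equiv(\bar x,\varphi(\bar x))$, $\varepsilon_k\to 0^+$ is legitimate (it uses only the trivial monotonicity $\widehat N_0\subseteq\widehat N_{\varepsilon_k}$ and the fact, which you correctly flag, that finiteness of $\varphi(\bar x)$ puts the base point in the epigraph), and the sign computation $\langle(0,-1),(u-\bar x,\alpha-\varphi(\bar x))\rangle=\varphi(\bar x)-\alpha\leqq 0$ for epigraph points with $u$ in the neighborhood of local minimality is airtight, since the $\limsup$ is governed only by points eventually inside that neighborhood. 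Your closing remark---that one could instead note $0$ belongs to the regular (Fr\'echet) subdifferential because $\varphi-\varphi(\bar x)$ is minorized by the zero function locally---is in fact the route taken in Mordukhovich's book, and the two arguments are equivalent up to the identification of the regular subdifferential with regular normals to the epigraph; your $\varepsilon$-normal computation has the minor advantage of using only the definitions as literally stated in this paper.
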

\begin{proposition}\label{robustness}{\rm (See \cite[Proposition 5.2.28]{Borwein05})}  Let  $\varphi\colon\mathbb{R}^n\to\overline{\mathbb{R}}$ be a lower semicontinuous function. Then the set-valued mapping $\partial \varphi\colon\mathbb{R}^n\rightrightarrows\mathbb{R}^n$ is closed.
\end{proposition}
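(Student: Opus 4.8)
The plan is to obtain the closed‑graph property of $\partial\varphi$ as a consequence of the analogous property of the limiting normal cone, which is essentially built into its very definition, and then transfer it through the epigraphical description of the subdifferential. So I would first isolate the auxiliary fact that for an arbitrary set $S\subseteq\mathbb{R}^n$ the mapping $x\mapsto N(x;S)$ has closed graph relative to $\mathrm{cl}\,S$: whenever $x^k\xrightarrow{S}\bar x$, $z^*_k\to z^*$ and $z^*_k\in N(x^k;S)$ for all $k$, one has $z^*\in N(\bar x;S)$. This is a pure diagonalization against the definition. For each fixed $k$ the membership $z^*_k\in N(x^k;S)$ furnishes sequences $u^{k,j}\xrightarrow{S}x^k$, $\varepsilon_{k,j}\to 0^+$ and $w^*_{k,j}\to z^*_k$ with $w^*_{k,j}\in\widehat N_{\varepsilon_{k,j}}(u^{k,j};S)$; choosing the index $j=j(k)$ so that the $k$‑th triple lies within $1/k$ of its target produces sequences $u^k:=u^{k,j(k)}\xrightarrow{S}\bar x$, $\delta_k:=\varepsilon_{k,j(k)}\to 0^+$ and $w^*_k:=w^*_{k,j(k)}\to z^*$ with $w^*_k\in\widehat N_{\delta_k}(u^k;S)$, which exhibit $z^*\in N(\bar x;S)$ directly.

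Second, I would transfer this to the subdifferential through the epigraph. Take $(x^k,x^*_k)$ in the graph of $\partial\varphi$ with $x^k\to\bar x$ and $x^*_k\to\bar x^*$. By the defining relation, $(x^*_k,-1)\in N((x^k,\varphi(x^k));\mathrm{epi}\,\varphi)$, and clearly $(x^*_k,-1)\to(\bar x^*,-1)$. Since $\varphi$ is lower semicontinuous, $\mathrm{epi}\,\varphi$ is closed, so the base points $(x^k,\varphi(x^k))$ lie in $\mathrm{epi}\,\varphi$. If these base points converge, i.e.\ $(x^k,\varphi(x^k))\to(\bar x,\varphi(\bar x))$ inside $\mathrm{epi}\,\varphi$, then applying the first step with $S=\mathrm{epi}\,\varphi$ yields $(\bar x^*,-1)\in N((\bar x,\varphi(\bar x));\mathrm{epi}\,\varphi)$, that is, $\bar x^*\in\partial\varphi(\bar x)$, which is exactly the desired closedness.

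The crux is therefore the convergence of the epigraphical base points, which amounts to showing $\varphi(x^k)\to\varphi(\bar x)$, and this is the step I expect to carry the real content. Lower semicontinuity alone gives only $\liminf_k\varphi(x^k)\ge\varphi(\bar x)$, so the value convergence cannot simply be read off and must be supplied: either by invoking $\varphi$‑attentive convergence, under which the convergence $x^k\to\bar x$ is understood together with $\varphi(x^k)\to\varphi(\bar x)$ and the statement becomes the standard robustness of the limiting subdifferential, or by restricting to the setting actually used in the sequel, where $\varphi$ is continuous (indeed locally Lipschitz) and $\varphi(x^k)\to\varphi(\bar x)$ holds automatically. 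In either formulation the two preceding steps then close the argument, and I would state the continuity hypothesis explicitly so that the passage to the limit in $\mathrm{epi}\,\varphi$ is unconditional.
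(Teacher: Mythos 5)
The paper does not actually prove this proposition; it is quoted from Borwein and Zhu \cite[Proposition 5.2.28]{Borwein05}, so there is no internal proof to compare against, and your argument has to be judged on its own merits. On those merits it is correct and is the standard argument: the diagonal extraction showing that $x^k\xrightarrow{S}\bar x$, $z^*_k\in N(x^k;S)$, $z^*_k\to z^*$ force $z^*\in N(\bar x;S)$ is exactly the robustness of the limiting normal cone in finite dimensions (it works because the defining limits already tolerate nearby base points and positive $\varepsilon$), and the transfer to $\partial\varphi$ through $N\bigl(\cdot\,;\mathrm{epi}\,\varphi\bigr)$ is immediate once the base points $(x^k,\varphi(x^k))$ converge to $(\bar x,\varphi(\bar x))$.

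Your diagnosis of the crux is also genuine, not pedantry: for a merely lower semicontinuous $\varphi$, closedness of the graph of $\partial\varphi$ under plain convergence $x^k\to\bar x$, $x^*_k\to\bar x^*$ is \emph{false}. Take $\varphi(x)=0$ for $x\leqq 0$ and $\varphi(x)=1-x$ for $x>0$, which is lower semicontinuous on $\mathbb{R}$. Then $\partial\varphi(1/k)=\{-1\}$ for all $k\geqq 2$, while near $(0,0)$ the epigraph coincides with the quadrant $(-\infty,0]\times[0,+\infty)$, so that, the normal cone being a local construction, $\partial\varphi(0)=[0,+\infty)$; hence the graph points $(1/k,-1)$ converge to $(0,-1)$, which lies outside the graph. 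So the proposition must be read exactly as you propose: either convergence is understood $\varphi$-attentively ($\varphi(x^k)\to\varphi(\bar x)$ along with $x^k\to\bar x$), or $\varphi$ is assumed continuous. This imprecision is harmless for the paper, since Proposition \ref{robustness} is invoked only in the proof of Theorem \ref{KKT-theorem}, for the locally Lipschitz functions $f_l$, $g_i$ and $\pm h_j$, where continuity makes plain convergence automatically $\varphi$-attentive and your two steps close the argument without further assumptions.
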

\section{Main results}\label{Main results}
Let $\mathcal{L}:=\{1, \ldots, p\}$, $\mathcal{I}:=\{1, \ldots, m\}$ and $\mathcal{J}:=\{1, \ldots, r\}$ be index sets.  Suppose that  $f=(f_1, \ldots, f_p)\colon \mathbb{R}^n\to\mathbb{R}^p$,  $g=(g_1, \ldots, g_m)\colon \mathbb{R}^n\to\mathbb{R}^m$, and $h=(h_1, \ldots, h_r)\colon \mathbb{R}^n\to\mathbb{R}^r$ are vector-valued functions with locally Lipschitz components  defined on $\mathbb{R}^n$. Let $\mathbb{R}^p_+$ be the nonnegative orthant of $\mathbb{R}^p$.  For $a, b\in \mathbb{R}^p$, by $a\leqq b$, we mean $a-b\in -\mathbb{R}^p_+$; by $a\leq b$, we mean $a-b\in -\mathbb{R}^p_+\setminus\{0\}$; and by $a<b$, we mean $a-b\in-\text{int}\,\mathbb{R}^p_+$.

\medskip
We focus on the following constrained multiobjective optimization problem:
\begin{align}\label{problem}
\min\,_{\mathbb{R}^p_+} f(x)\ \ \text{subject to}\ \ x\in \mathcal{F}, \tag{MOP}
\end{align}
where $\mathcal{F}$  is the feasible set given by
$
\mathcal{F}:=\{x\in\mathbb{R}^n\,:\, g(x)\leqq 0, h(x)=0\}.
$

\begin{definition}{\rm Let $\bar x\in \mathcal{F}$. We say that:
		\begin{enumerate}
			\item [(i)] $\bar x$ is {\em a (global) weak efficient solution} of \eqref{problem}  iff there is no $x\in \mathcal{F}$ satisfying   $f(x)<f(\bar x)$.
			
			\item [(ii)] $\bar x$ is a {\em local weak efficient solution} of \eqref{problem} iff there exists a neighborhood $U$  of $\bar x$ such that $\bar x$ is a weak efficient solution on $U\cap \mathcal{F}$.
		\end{enumerate}
	}
\end{definition}

\medskip
We now introduce the concept of approximate Karush--Kuhn--Tucker condition for \eqref{problem} inspired by the work of Giorgi, Jim\'enez and Novo \cite{Giorgi16}.

\begin{definition}{\rm We say that the {\em approximate Karush--Kuhn--Tucker condition} (AKKT) is satisfied for \eqref{problem} at a feasible point $\bar x$ iff there exist sequences $\{x^k\}\subset \mathbb{R}^n$ and $\{(\lambda^k, \mu^k, \tau^k)\}\subset \mathbb{R}^p_+\times\mathbb{R}^m_+\times\mathbb{R}^r$ such that
		\begin{enumerate}
			\item [(A0)] $x^k\to \bar x$,
			\item[(A1)] $\mathfrak{m}(x^k; \lambda^k, \mu^k, \tau^k)\to 0$ as $k\to \infty$, where
			\begin{align*}
			\mathfrak{m}(x^k; \lambda^k, \mu^k, \tau^k):=\inf\bigg\{&\bigg\|\sum_{l=1}^{p}\lambda_l^k\xi_l+\sum_{i=1}^{m}\mu^k_i\eta_i+\sum_{j=1}^{r}\tau^k_j\gamma_j\bigg\|\;:\; \xi_l\in \partial f_l(x^k),
			\\
			&\eta_i\in \partial g_i(x^k), \gamma_j\in [\partial h_j(x^k)\cup\partial(-h_j)(x^k)]\bigg\},
			\end{align*}
			\item[(A2)] $\displaystyle\sum_{l=1}^{p}\lambda^k_l=1$,
			\item[(A3)] $g_i(\bar x)<0\Rightarrow \mu^k_i=0$ for sufficiently large $k$ and $i\in\mathcal{I}$.
		\end{enumerate}	
	}	
\end{definition}

\medskip
We are now ready to state and prove our main results.

\begin{theorem}\label{necessary-theorem} If $\bar x\in \mathcal{F}$ is a local weak efficient solution of \eqref{problem}, then there exist sequences $\{x^k\}$ and $\{(\lambda^k, \mu^k, \tau^k)\}$ satisfying the AKKT condition at $\bar x$. Furthermore, we can choose these sequences such that the following conditions hold:
	\begin{enumerate}
		\item [{\rm(E1)}] $\mu_i^k=b_k\max(g_i(x^k), 0)\geqq 0,$  $\forall i\in\mathcal{I}$, and  $\tau_j^k=c_kh_j(x^k)\geqq 0,$   $\forall j\in\mathcal{J}$, where $b_k, c_k>0,$  $\forall k\in\mathbb{N}$,
		\item [{\rm(E2)}] $f_l(x^k)-f_l(\bar x)+\frac{1}{2}\left[\sum_{i=1}^{m}\mu_i^kg_i(x^k)+ \sum_{j=1}^{r}\tau^k_jh_j(x^k)\right]\leqq 0,$  $\forall k\in\mathbb{N}$, $l\in\mathcal{L}$.
	\end{enumerate}
\end{theorem}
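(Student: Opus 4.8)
The plan is to scalarize the vector problem and run a quadratic exterior--penalty argument in the spirit of Andreani, Mart\'inez and Svaiter \cite{Andreani2010}. The first observation is that local weak efficiency of $\bar x$ is equivalent to $\bar x$ being a local minimizer over $\mathcal F$ of the scalar function $F(x):=\max_{l\in\mathcal L}[f_l(x)-f_l(\bar x)]$, with optimal value $F(\bar x)=0$: no feasible $x$ close to $\bar x$ can satisfy $f_l(x)<f_l(\bar x)$ for every $l$, which is precisely $F(x)\geqq 0=F(\bar x)$. I fix $\delta>0$ so small that $\bar x$ is weakly efficient on $(\bar x+\delta\mathbb B^n)\cap\mathcal F$.

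For each $k\in\mathbb N$ I introduce the penalized functional
\begin{equation*}
\Phi_k(x):=\max_{l\in\mathcal L}[f_l(x)-f_l(\bar x)]+\frac{k}{2}\Big[\sum_{i\in\mathcal I}\max(g_i(x),0)^2+\sum_{j\in\mathcal J}h_j(x)^2\Big]+\frac{1}{2}\|x-\bar x\|^2,
\end{equation*}
which is continuous, and let $x^k$ minimize $\Phi_k$ over the compact set $\bar x+\delta\mathbb B^n$. From $\Phi_k(x^k)\leqq\Phi_k(\bar x)=0$ the penalty term stays bounded, so $\max(g_i(x^k),0)\to 0$ and $h_j(x^k)\to 0$; hence every cluster point of $\{x^k\}$ is feasible and lies in the ball. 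Dropping the nonnegative penalty leaves $\max_l[f_l(x^k)-f_l(\bar x)]+\tfrac12\|x^k-\bar x\|^2\leqq 0$, and at a cluster point weak efficiency forces the max to be nonnegative, so the localizing term kills any displacement and the unique cluster point is $\bar x$; thus $x^k\to\bar x$, giving (A0). Bounding the max below by each coordinate, the same inequality gives, for every $l$,
\begin{equation*}
f_l(x^k)-f_l(\bar x)+\frac{k}{2}\Big[\sum_{i}\max(g_i(x^k),0)^2+\sum_j h_j(x^k)^2\Big]\leqq 0,
\end{equation*}
which I will convert into (E2).

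Since $x^k\to\bar x$, for large $k$ the point $x^k$ is interior to the ball, so Fermat's rule (Proposition \ref{Fermat-rule}) yields $0\in\partial\Phi_k(x^k)$. I then peel off the three summands with the sum rule (Proposition \ref{sum-rule}), treat the first with the max rule (Proposition \ref{max-rule}), and differentiate the penalty with the chain rule (Proposition \ref{chain-rule}), using that the outer maps $t\mapsto\max(t,0)^2$ and $t\mapsto t^2$ are $C^1$ (hence strictly differentiable) with derivatives $2\max(t,0)$ and $2t$. This produces $\lambda^k\in\Lambda(x^k)$, $\xi_l\in\partial f_l(x^k)$, $\eta_i\in\partial g_i(x^k)$ and $\gamma_j\in\partial h_j(x^k)\cup\partial(-h_j)(x^k)$ with
\begin{equation*}
\sum_{l}\lambda^k_l\xi_l+\sum_i\mu^k_i\eta_i+\sum_j\tau^k_j\gamma_j=-(x^k-\bar x),
\end{equation*}
where $\mu_i^k:=k\max(g_i(x^k),0)$ and $\tau_j^k:=k|h_j(x^k)|$ (so $b_k=c_k=k>0$ and both multipliers are nonnegative, as required by (E1)). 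The right-hand side tends to $0$, hence $\mathfrak m(x^k;\lambda^k,\mu^k,\tau^k)\leqq\|x^k-\bar x\|\to 0$, which is (A1); the normalization built into $\Lambda(x^k)$ gives (A2); and if $g_i(\bar x)<0$ then $g_i(x^k)<0$ for large $k$, so $\mu_i^k=0$, giving (A3).

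The delicate point, and the main obstacle, is the sign bookkeeping for the equality constraints under the limiting subdifferential. Since Proposition \ref{nonnegative-scaling} permits $\partial(\lambda\varphi)=\lambda\partial\varphi$ only for $\lambda\geqq 0$, the chain-rule factor $2h_j(x^k)$ cannot be pulled out when $h_j(x^k)<0$; instead one must write $\partial\big(2h_j(x^k)\,h_j\big)(x^k)=2|h_j(x^k)|\,\partial(-h_j)(x^k)$. This is exactly why the AKKT stationarity is phrased with $\gamma_j\in\partial h_j(x^k)\cup\partial(-h_j)(x^k)$, and why the sign-consistent multiplier is the nonnegative $\tau_j^k=k|h_j(x^k)|$ rather than $kh_j(x^k)$ (the signed choice would demand $-\gamma_j\in\partial(-h_j)(x^k)$, which need not lie in the admissible union for nonsmooth $h_j$). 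This nonnegative choice is still compatible with (E2): because $|h_j(x^k)|h_j(x^k)\leqq h_j(x^k)^2$, one has $\tfrac12\sum_j\tau^k_jh_j(x^k)\leqq\tfrac{k}{2}\sum_j h_j(x^k)^2$, so the displayed inequality above yields $f_l(x^k)-f_l(\bar x)+\tfrac12\big[\sum_i\mu_i^kg_i(x^k)+\sum_j\tau_j^kh_j(x^k)\big]\leqq 0$, which is (E2). A secondary care is that the calculus rules are inclusions only; but since the argument asserts existence of subgradients realizing $0\in\partial\Phi_k(x^k)$, these inclusions are traversed in the favorable direction throughout.
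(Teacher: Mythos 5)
Your construction coincides with the paper's own proof up to and including conditions (A0)--(A3): same penalty functional, same Weierstrass/cluster-point argument, same Fermat-plus-calculus-rules analysis, same multipliers $\mu_i^k=k\max(g_i(x^k),0)$; and your derivation of (E2) from the penalty inequality via $|h_j(x^k)|\,h_j(x^k)\leqq (h_j(x^k))^2$ is sound. The genuine gap is in (E1) for the equality constraints. You set $\tau_j^k:=k|h_j(x^k)|$ and assert this is ``as required by (E1)'' with $c_k=k$; but (E1) demands the structural form $\tau_j^k=c_k h_j(x^k)$ with $c_k>0$, not mere nonnegativity of $\tau_j^k$. When $h_j(x^k)<0$ you have $k|h_j(x^k)|=-k\,h_j(x^k)$, which is of the form $c_k h_j(x^k)$ only with $c_k=-k<0$; equivalently, your choice gives $\tau_j^k h_j(x^k)=-k(h_j(x^k))^2<0$, so the sign condition SGN --- precisely what (E1) is designed to encode, and what Theorem \ref{KKT-theorem} later depends on --- fails. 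You correctly identified the obstruction (for nonsmooth $h_j$ one cannot trade $\gamma_j$ for $-\gamma_j$ inside $\partial h_j(x^k)\cup\partial(-h_j)(x^k)$, since that union is not symmetric under negation), but your resolution silently weakens (E1) rather than proving it.

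The missing idea, which is how the paper closes exactly this gap, is a relabeling argument exploiting the symmetry $h_j\mapsto -h_j$ of the problem. For each $j$, pass to a subsequence along which $h_j(x^k)$ has constant sign (finitely many $j$, so finitely many extractions; (A0)--(A3) and (E2) survive passage to subsequences). For those $j$ with $h_j(x^k)<0$ for all $k$, replace $h_j$ by $\bar h_j:=-h_j$: the feasible set $\mathcal{F}$ is unchanged, and $\partial \bar h_j(x^k)\cup\partial(-\bar h_j)(x^k)=\partial h_j(x^k)\cup\partial(-h_j)(x^k)$, so conditions (A0)--(A3) are untouched, while now $\bar h_j(x^k)>0$. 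After this renaming one has $h_j(x^k)\geqq 0$ for all $j$ and $k$, hence $\tau_j^k=k|h_j(x^k)|=k\,h_j(x^k)$, and (E1) holds with $b_k=c_k=k$. Note that the sign flip is applied to the constraint function itself --- never to the subgradient $\gamma_j$ or the multiplier $\tau_j^k$ --- which is exactly what circumvents the asymmetry of the union that you rightly worried about.
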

\begin{proof} Since $\bar x$ is a local weak efficient solution of \eqref{problem}, $f_l$, $g_i$ and $h_j$ are locally Lipschitz functions, we can choose $\delta>0$ such that these functions are Lipschitz on $B(\bar x, \delta):=\{x\in\mathbb{R}^n\;:\; \|x-\bar x\|\leqq \delta\}$ and $\bar x$ is a global weak efficient solution of $f$ on $\mathcal{F}\cap B(\bar x, \delta)$. It is easily seen that $\bar x$ is also a global minimum solution of the function $\phi(\cdot):=\max\{f_l(\cdot)-f_l(\bar x)\;:\; l\in\mathcal{L}\}$ on $\mathcal{F}\cap B(\bar x, \delta)$.

For each $k\in \mathbb{N}$, we consider the following problem
	\begin{align}\label{penalized-problem}
	\min \{\varphi_k(x)\;:\; x\in B(\bar x, \delta)\}, \tag{P$_k$}
	\end{align}
	where
	$$\varphi_k(x):=\phi(x)+\frac{k}{2}\bigg[\sum_{i=1}^{m}(\max(g_i(x),0))^2+\sum_{j=1}^{r}(h_j(x))^2\bigg]+\frac{1}{2}\|x-\bar x\|^2.$$
	Clearly, $\varphi_k$ is continuous on the compact set $B(\bar x, \delta)$. Thus, by the Weierstrass theorem, the problem \eqref{penalized-problem} admits an optimal solution, say $x^k$. This and the fact that $\varphi_k(\bar x)=0$ imply that
	\begin{equation} \label{equa-1}
	\phi(x^k)+\frac{k}{2}\bigg[\sum_{i=1}^{m}(\max(g_i(x^k),0))^2+\sum_{j=1}^{r}(h_j(x^k))^2\bigg]+\frac{1}{2}\|x^k-\bar x\|^2\leqq 0,
	\end{equation}
	or, equivalently,
	\begin{equation}\label{equa-2}
	\bigg[\sum_{i=1}^{m}(\max(g_i(x^k),0))^2+\sum_{j=1}^{r}(h_j(x^k))^2\bigg]\leqq -\frac{1}{k}\bigg[2\phi(x^k)+\|x^k-\bar x\|^2\bigg].
	\end{equation}
	By the continuity of $\phi$ and $\|x^k-\bar x\|\leqq \delta$, the right-hand-side of \eqref{equa-2} tends to zero when $k$ tends to infinity. Hence,
	\begin{equation*}
	\max(g_i(x^k),0)\to 0, \ \ \forall i\in\mathcal{I},\ \ h_j(x^k) \to 0, \ \ \forall j\in\mathcal{J}, \ \ \text{as}\ \ k\to\infty.
	\end{equation*}
This and the continuity of the functions $\max(g_i(\cdot),0)$ and $h_j$ imply that every accumulation point of $\{x^k\}$ must belongs to $\mathcal{F}$. Since $\{x^k\}\subset B(\bar x, \delta)$, the sequence has at least an accumulation point, say $\tilde{x}\in \mathcal{F}$. By \eqref{equa-1}, one has
	\begin{equation*}
	\phi(x^k)+\frac{1}{2}\|x^k-\bar x\|^2\leqq 0, \ \ \forall k\in\mathbb{N}.
	\end{equation*}
	Passing the last inequality to the limit as $k\to\infty$, we get
	\begin{equation*}
	\phi(\tilde{x})+\frac{1}{2}\|\tilde{x}-\bar x\|^2\leqq 0.
	\end{equation*}
	This and $\phi(\tilde{x})\geqq 0$ imply that $\tilde{x}=\bar x$. This means that the sequence $\{x^k\}$ has a unique accumulation point $\bar x$, thus
	converges. Consequently, $x^k$ belongs to the interior of $B(\bar x, \delta)$ for $k$ large enough. Thanks to Proposition \ref{Fermat-rule}, we have
	\begin{equation}\label{equa-3}
	0\in \partial \varphi_k(x^k).
	\end{equation}
	By Propositions \ref{nonnegative-scaling}--\ref{chain-rule}, one has
	\begin{equation*}
	\partial \varphi_k(x^k) \subset \partial \phi(x^k)+\frac{k}{2} \sum_{i=1}^{m}\partial(\max(g_i(\cdot),0))^2(x^k)+\frac{k}{2}\sum_{j=1}^{r}\partial(h_j)^2(x^k)+(x^k-\bar x),
	\end{equation*}
	where
	\begin{align*}
	\partial \phi(x^k)&\subset \bigcup\bigg\{\partial\bigg(\sum_{l=1}^{p}\lambda_l(f_l(\cdot)-f_l(\bar x))\bigg)(x^k)\;:\; (\lambda_1, \ldots, \lambda_p)\in\Lambda(x^k)\bigg\}
	\\
	&\subset \bigcup\bigg\{ \sum_{i=1}^{p}\lambda_l\partial f_l (x^k)\;:\; (\lambda_1, \ldots, \lambda_p)\in\Lambda(x^k)\bigg\},
	\end{align*}
	with $$\Lambda(x^k)=\bigg\{(\lambda_1, \ldots, \lambda_p):\lambda_l\geqq 0, \sum_{l=1}^{p}\lambda_l=1, \lambda_l[(f_l(x^k)-f_l(\bar x))-\phi(x^k)]=0\bigg\},$$
	and
	\begin{align*}
	\partial(\max(g_i(\cdot),0))^2(x^k)&\subset \partial (2\max(g_i(x^k), 0)\max (g_i(\cdot), 0))(x^k)
	\\
	&=2\max(g_i(x^k), 0)\,\partial (\max (g_i(\cdot), 0))(x^k)
		\\
	&= 2\max(g_i(x^k), 0)\,\partial g_i(x^k),
	\\
	\partial(h_j)^2(x^k)&\subset \partial (2h_j(x^k)h_j)(x^k)
		\\
	&\subset 2|h_j(x^k)|\left[\partial h_j(x^k)\cup\partial (-h_j)(x^k)\right].
	\end{align*}
	Hence, \eqref{equa-3} implies that
	\begin{align*}
	0&\in  \bigcup\bigg\{ \sum_{i=1}^{p}\lambda_l\partial f_l (x^k)\;:\; (\lambda_1, \ldots, \lambda_p)\in\Lambda(x^k)\bigg\} +k\sum_{i=1}^{m}\max(g_i(x^k), 0) \partial g_i(x^k)
	\\
	&\;\quad\quad\quad\quad\quad\quad\quad\quad\quad\quad+k\sum_{j=1}^{r} |h_j(x^k)|\left[\partial h_j(x^k)\cup\partial (-h_j)(x^k)\right]+(x^k-\bar x).
	\end{align*}
	This means that there exist $(\lambda_1^k, \ldots, \lambda_p^k)\in \Lambda(x^k)$, $\xi_l^k\in \partial f_l (x^k)$, $\eta_i^k\in \partial g_i(x^k)$ and $\gamma_j^k\in \left[\partial h_j(x^k)\cup\partial (-h_j)(x^k)\right]$  such that
	\begin{equation*}
	\sum_{i=1}^{p}\lambda_l^k\xi_l^k+k\sum_{i=1}^{m}\max(g_i(x^k), 0)\eta_i^k+k\sum_{j=1}^{r} |h_j(x^k)|\gamma_j^k+(x^k-\bar x)=0.
	\end{equation*}
	Hence,
	\begin{equation*}
	\bigg\|\sum_{i=1}^{p}\lambda_l^k\xi_l^k+k\sum_{i=1}^{m}\max(g_i(x^k), 0)\eta_i^k+k\sum_{j=1}^{r} |h_j(x^k)|\gamma_j^k\bigg\|=\|x^k-\bar x\|.
	\end{equation*}
	Setting
	$	\lambda^k=(\lambda_1^k, \ldots, \lambda_p^k), \mu^k=(\mu_1^k, \ldots, \mu_m^k), \tau^k=(\tau_1^k, \ldots, \tau_r^k),$ 	where
	\begin{align*}
	\mu_i^k:=k\max(g_i(x^k), 0)\geqq 0, \ \ \forall i\in\mathcal{I}, 	\tau_j^k:=k|h_j(x^k)|\geqq 0, \ \ \forall j\in\mathcal{J}.
	\end{align*}
	For each $k\in\mathbb{N}$, we have
	\begin{align*}
	0\leqq \mathfrak{m}(x^k; \lambda^k, \mu^k, \tau^k)&\leqq \bigg\|\sum_{i=1}^{p}\lambda_l^k\xi_l^k+\sum_{i=1}^{m}\mu_i^k\eta_i^k+\sum_{j=1}^{r} \tau_j^k\gamma_j^k\bigg\|
	\\
	&=\|x^k-\bar x\|.
	\end{align*}
This and $\lim\limits_{k\to\infty}x^k=\bar x$ imply that $\lim\limits_{k\to\infty}\mathfrak{m}(x^k; \lambda^k, \mu^k, \tau^k)=0.$ Thus, $\bar x$ satisfies conditions (A0)--(A2). If $g_j(\bar x)<0$, then $g_j(x^k)<0$ for  $k$ large enough. Consequently, $\mu_i^k=0$ for $k$ large enough and we therefore get condition (A3).
	
For each $j\in\mathcal{J}$, by passing to a subsequence if necessary, we may assume that $h_j(x^k)\geqq 0$ for all $k\in\mathbb{N}$, or $h_j(x^k)< 0$ for all $k\in\mathbb{N}$. For the last case, by replacing $h_j$ by $\bar h_j:=-h_j$, one has
	\begin{equation*}
	\{x\in\mathbb{R}^n:g_i(x)\leqq 0, i\in\mathcal{I}, h_k(x)=0, k\in\mathcal{J}, k\neq j, \bar h_j(x)=0\}=\mathcal{F},
	\end{equation*}
	\begin{equation*}
	\partial h_j(x^k)\cup\partial (-h_j)(x^k)=\partial \bar h_j(x^k)\cup\partial (-\bar h_j)(x^k),
	\end{equation*}
	and $\bar h_j(x^k)\geqq 0$ for all $k\in\mathbb{N}$. Hence we may assume that  $h_j(x^k)\geqq 0$ for all $k\in\mathbb{N}$ and $j\in \mathcal{J}$. This means that  $\tau^k_j=kh_j(x^k)\geqq 0$ for all $k\in\mathbb{N}$ and $j\in\mathcal{J}$ and we therefore get condition (E1). Moreover, we see that
	\begin{equation*}
	\mu_i^kg_i(x^k)=k(\max(g_i(x^k), 0))^2\ \ \text{and}\ \ \tau^k_jh_j(x^k)=k(h_j(x^k))^2.
	\end{equation*}
	Thus, \eqref{equa-1} can be rewrite as
	\begin{equation*}
	\phi(x^k)+\frac{1}{2}\bigg[\sum_{i=1}^{m}\mu_i^kg_i(x^k)+\sum_{j=1}^{r}\tau_j^kh_j(x^k)\bigg]+\frac{1}{2}\|x^k-\bar x\|^2\leqq 0
	\end{equation*}
	and condition (E2) follows. The proof is complete. $\hfill\Box$
\end{proof}

\begin{remark}{\rm If $h_j$, $j\in\mathcal{J}$, are continuously differentiable functions, then
		\begin{align*}
		&\partial (h_j)^2(x^k)=2h_j(x^k)\nabla h_j(x^k) \ \ \text{and}
		\\
		&\partial h_j(x^k)\cup\partial (-h_j)(x^k)= \left\{\nabla h_j(x^k), -\nabla h_j(x^k) \right\}.
		\end{align*}}
In this case, we can choose $\gamma_j^k=\nabla h_j(x^k)$ for all  $j\in\mathcal{J}$ and $k\in\mathbb{N}$. Thus, the conclusions of Theorem \ref{necessary-theorem} still hold if condition (A1) is replaced by the following condition:
\begin{enumerate}
	\item [(A1)$^\prime$] $\mathfrak{m}^\prime(x^k; \lambda^k, \mu^k, \tau^k)\to 0$ as $k\to \infty$, where
	\begin{align*}
	\mathfrak{m}^\prime(x^k; \lambda^k, \mu^k, \tau^k):=\inf\bigg\{&\bigg\|\sum_{l=1}^{p}\lambda_l^k\xi_l+\sum_{i=1}^{m}\mu^k_i\eta_i+\sum_{j=1}^{r}\tau^k_j\nabla h_j(x^k)\bigg\|\;:\;
	\\
	&\quad\quad\quad\quad\quad\quad\quad\quad\xi_l\in \partial f_l(x^k), \eta_i\in \partial g_i(x^k)\bigg\}.
	\end{align*}
\end{enumerate}
Conditions (A0), (A1)$^\prime$, (A2), (A3) are called by the AKKT$^\prime$ condition. In case the problem \eqref{problem} has no equality constraints, then conditions  (A1) and (A1)$^\prime$ coincide. In general, condition (A1)$^\prime$ is stronger than condition (A1) because
\begin{equation*}
\mathfrak{m}(x^k; \lambda^k, \mu^k, \tau^k)\leqq\mathfrak{m}^\prime(x^k; \lambda^k, \mu^k, \tau^k).
\end{equation*}
Thus if $\bar x$ satisfies the AKKT$^\prime$ condition with respect to sequences $\{x^k\}$ and $\{(\lambda^k, \mu^k, \tau^k)\}$, then so does the AKKT one.
\end{remark}

\begin{definition} {\rm (See \cite[Remark 3.2]{Giorgi16}) Let $\bar x$ be a feasible point of \eqref{problem}. We say that:
\begin{enumerate}
	\item [(i)] $\bar x$ satisfies the {\em  sign condition} (SGN) with respect to sequences $\{x^k\}\subset \mathbb{R}^n$ and $\{(\lambda^k, \mu^k, \tau^k)\}\subset \mathbb{R}^p_+\times\mathbb{R}^m_+\times\mathbb{R}^r$ iff, for every $k\in\mathbb{N}$, one has
	\begin{equation*}
	\mu_i^k g_i(x^k)\geqq 0, i\in\mathcal{I}, \ \ \text{and} \ \ \tau_j^k h_j(x^k)\geqq 0, j\in\mathcal{J}.
	\end{equation*}
	\item [(ii)] $\bar x$ satisfies the {\em  sum converging to zero condition} (SCZ) with respect to sequences $\{x^k\}\subset \mathbb{R}^n$ and $\{(\lambda^k, \mu^k, \tau^k)\}\subset \mathbb{R}^p_+\times\mathbb{R}^m_+\times\mathbb{R}^r$ iff
	\begin{equation*}
	\sum_{i=1}^{m}\mu_i^kg_i(x^k)+\sum_{j=1}^{r} \tau_j^kh_j(x^k)\to 0 \ \ \text{as} \ \ k\to\infty.
	\end{equation*}
\end{enumerate}
	}
\end{definition}
\begin{remark}{\rm Clearly, if condition (E1) holds at $\bar x$, then so does condition SGN. Moreover, thanks to \cite[Remark 3.2]{Giorgi16}, conditions (A0), SGN and (E2) imply condition SCZ. The converse does not hold in general; see \cite[Remark 3.4]{Giorgi16}.		
	}
\end{remark}

\medskip
The following result gives sufficient optimality conditions for (global) weak efficient solutions of convex problems.

\begin{theorem}\label{sufficient-theorem} Assume that $f_l$ $(l=1, \ldots, p)$ and $g_i$ $(i=1, \ldots, m)$ are convex and $h_j$ $(j=1, \ldots, r)$ are affine. If $\bar x$ satisfies  conditions $AKKT^\prime$ and SCZ with respect to sequences $\{x^k\}\subset \mathbb{R}^n$ and $\{(\lambda^k, \mu^k, \tau^k)\}\subset \mathbb{R}^p_+\times\mathbb{R}^m_+\times\mathbb{R}^r$, then $\bar x$ is a  weak efficient solution of \eqref{problem}.
\end{theorem}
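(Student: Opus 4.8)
The plan is to argue by contradiction: suppose $\bar x$ is not a weak efficient solution, so there exists $x^*\in\mathcal{F}$ with $f_l(x^*)<f_l(\bar x)$ for every $l\in\mathcal{L}$. The idea is to turn the approximate stationarity in $AKKT'$ into an asymptotic weighted subgradient inequality tested against $x^*$, and then let $k\to\infty$ so that the two error terms (the residual vector and the SCZ sum) vanish, which will contradict the strict inequalities $f_l(x^*)<f_l(\bar x)$.

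First I would exploit convexity. Since each $f_l$ and each $g_i$ is convex (and locally Lipschitz), its Mordukhovich subdifferential coincides with the subdifferential of convex analysis, so every $\xi\in\partial f_l(x^k)$ obeys the global inequality $f_l(x^*)-f_l(x^k)\geqq\langle\xi,x^*-x^k\rangle$, and likewise for $g_i$; since each $h_j$ is affine, $h_j(x^*)-h_j(x^k)=\langle\nabla h_j(x^k),x^*-x^k\rangle$ holds with equality, which is exactly why the $AKKT'$ form (with $\nabla h_j$) is the right hypothesis here. By Proposition \ref{compactness} the sets $\partial f_l(x^k)$ and $\partial g_i(x^k)$ are compact, so the infimum defining $\mathfrak{m}'(x^k;\lambda^k,\mu^k,\tau^k)$ is attained at some $\xi_l^k\in\partial f_l(x^k)$, $\eta_i^k\in\partial g_i(x^k)$; writing $v^k:=\sum_{l=1}^{p}\lambda_l^k\xi_l^k+\sum_{i=1}^{m}\mu_i^k\eta_i^k+\sum_{j=1}^{r}\tau_j^k\nabla h_j(x^k)$ we have $\|v^k\|=\mathfrak{m}'(x^k;\lambda^k,\mu^k,\tau^k)\to0$ by (A1$'$).

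Next I would multiply the three relations by the nonnegative multipliers $\lambda_l^k,\mu_i^k,\tau_j^k$ and add them, so the right-hand sides collapse to $\langle v^k,x^*-x^k\rangle$. Using feasibility of $x^*$, namely $\sum_{i=1}^{m}\mu_i^k g_i(x^*)\leqq0$ (as $g_i(x^*)\leqq0$ and $\mu_i^k\geqq0$) and $\sum_{j=1}^{r}\tau_j^k h_j(x^*)=0$, I obtain the key estimate
\[\sum_{l=1}^{p}\lambda_l^k\big[f_l(x^*)-f_l(x^k)\big]\geqq\langle v^k,x^*-x^k\rangle+\Big[\sum_{i=1}^{m}\mu_i^k g_i(x^k)+\sum_{j=1}^{r}\tau_j^k h_j(x^k)\Big].\]
Because the unit simplex is compact, I pass to a subsequence along which $\lambda^k\to\bar\lambda$ with $\bar\lambda_l\geqq0$ and $\sum_{l=1}^{p}\bar\lambda_l=1$. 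Letting $k\to\infty$ and using (A0), the continuity of the $f_l$, the convergence $v^k\to0$, and the boundedness of $x^*-x^k$, the residual $\langle v^k,x^*-x^k\rangle\to0$, while SCZ forces the bracketed sum to $0$; hence $\sum_{l=1}^{p}\bar\lambda_l\big[f_l(x^*)-f_l(\bar x)\big]\geqq0$.

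Finally, since $f_l(x^*)-f_l(\bar x)<0$ for all $l$ and $\bar\lambda$ lies in the unit simplex (so $\bar\lambda_{l_0}>0$ for some $l_0$), the weighted sum $\sum_{l=1}^{p}\bar\lambda_l[f_l(x^*)-f_l(\bar x)]$ is strictly negative, contradicting the displayed inequality. Thus no such $x^*$ exists and $\bar x$ is a weak efficient solution. I expect the delicate point to be the passage to the limit rather than the algebra: one must ensure both error terms genuinely vanish, and it is here that the SCZ condition---rather than feasibility of $x^*$ alone---is used to annihilate the complementarity-type sum $\sum_i\mu_i^k g_i(x^k)+\sum_j\tau_j^k h_j(x^k)$ in the limit.
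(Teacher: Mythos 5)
Your proposal is correct and follows essentially the same route as the paper's own proof: contradiction via a strictly dominating feasible point, attainment of the infimum in $\mathfrak{m}'$ by compactness of the subdifferentials, the global convex subgradient inequalities tested at that point, feasibility to discard the $g_i$ and $h_j$ terms there, the residual $\langle v^k,\hat x-x^k\rangle\to 0$, and SCZ to kill the complementarity sum in the limit, yielding $\sum_l\lambda_l f_l(\hat x)\geqq\sum_l\lambda_l f_l(\bar x)$ against the strict inequalities. Your explicit remark that the Mordukhovich subdifferential of a convex locally Lipschitz function reduces to the convex-analysis subdifferential is a point the paper uses tacitly, but otherwise the arguments coincide.
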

\begin{proof} On the contrary, suppose that $\bar x$ is not a weak efficient solution of \eqref{problem}. Then, there exists $\hat x\in\mathcal{F}$ such that
	\begin{equation}\label{equa-4}
	f_l(\hat x)<f_l(\bar x) \ \ \text{for all} \ \ l\in\mathcal{L}.
	\end{equation}
	By condition (A2), without any loss of generality, we may assume that $\lambda^k\to \lambda$ with $\lambda\geq 0$ and $\displaystyle\sum_{l=1}^{p}\lambda_l=1$.  For $k$ large enough, the sets  $\partial f_l(x^k)$ and $\partial g_i(x^k)$ are compact. Hence, there exist  $\xi^k_l\in\partial f_l(x^k)$ and $\eta^k_i\in\partial g_i(x^k)$  such that
	\begin{equation*}
	\mathfrak{m}^\prime(x^k; \lambda^k, \mu^k, \tau^k)=\bigg\|\sum_{l=1}^{p}\lambda_l^k\xi^k_l+\sum_{i=1}^{m}\mu^k_i\eta^k_i+\sum_{j=1}^{r}\tau^k_j\nabla h_j(x^k)\bigg\|.
	\end{equation*}
As $f_l$ and  $g_i$ are convex and $h_j$ are affine, for each $k\in\mathbb{N}$, we have
	\begin{align}
	f_l(\hat x)&\geqq f_l(x^k)+\langle \xi_l^k, \hat x-x^k\rangle, \ \ \forall l\in\mathcal{L},\label{equa-5}
	\\
	g_i(\hat x)&\geqq g_i(x^k)+\langle \eta_i^k, \hat x-x^k\rangle, \ \ \forall i\in\mathcal{I},\label{equa-6}
	\\
	h_j(\hat x)&= h_j(x^k)+\langle \nabla h_j(x^k), \hat x-x^k\rangle, \ \ \forall j\in\mathcal{J}.\label{equa-7}
	\end{align}
	Multiplying \eqref{equa-5} by $\lambda_l^k$, \eqref{equa-6} by $\mu_i^k$, \eqref{equa-7} by $\tau^k_j$ and adding up, we obtain
	\begin{align}
	\sum_{l=1}^{p}\lambda_l^kf_l(\hat x)&\geqq  \sum_{l=1}^{p}\lambda_l^kf_l(\hat x)+\sum_{i=1}^{m}\mu_i^kg_i(\hat x)+\sum_{j=1}^{r}\tau_j^kh_j(\hat x)\notag
	\\
	&\geqq\sum_{l=1}^{p}\lambda_l^kf_l(x^k)+\sum_{i=1}^{m}\mu_i^kg_i(x^k)+\sum_{j=1}^{r}\tau_j^kh_j(x^k)+\sigma_k,\label{equa-8}
	\end{align}
	where $\sigma_k:= \big(\sum_{l=1}^{p}\lambda_l^k\xi^k_l+\sum_{i=1}^{m}\mu^k_i\eta^k_i+\sum_{j=1}^{r}\tau^k_j\nabla h_j(x^k)\big)(\hat x-x^k).$ 	Since $x^k\to \bar x$ and $\mathfrak{m}^\prime(x^k; \lambda^k, \mu^k, \tau^k)\to 0$  as $k\to \infty$, and
	\begin{align*}
	\|\sigma_k\|&\leqq \bigg\|\sum_{l=1}^{p}\lambda_l^k\xi^k_l+\sum_{i=1}^{m}\mu^k_i\eta^k_i+\sum_{j=1}^{r}\tau^k_j\nabla h_j(x^k)\bigg\| \|\hat x-x^k\|
	\\
	&= \mathfrak{m}^\prime(x^k; \lambda^k, \mu^k, \tau^k)\|\hat x-x^k\|,
	\end{align*}
	we have $\lim\limits_{k\to\infty}\sigma_k= 0$. By condition SCZ,  taking the limit in \eqref{equa-8}, we obtain
	\begin{equation}\label{equa-9}
	\sum_{l=1}^{p}\lambda_lf_l(\hat x)\geqq\sum_{l=1}^{p}\lambda_lf_l(\bar x).
	\end{equation}
	Moreover, since $\lambda\geq 0$ and \eqref{equa-4}, we have $	\sum_{l=1}^{p}\lambda_lf_l(\hat x)<\sum_{l=1}^{p}\lambda_lf_l(\bar x),$	
	contrary to \eqref{equa-9}. The proof is complete. $\hfill\Box$
\end{proof}

\medskip
Clearly, if $f$, $g$ and $h$ are continuously differentiable, then Theorem  \ref{necessary-theorem} and Theorem \ref{sufficient-theorem} reduce to  \cite[Theorem 3.1]{Giorgi16} and \cite[Theorem 3.2]{Giorgi16}, respectively.

\medskip
We now show that, under the additional that the quasi-normality constraint qualification and  condition (E1) hold at a given feasible solution $\bar x$, an AKKT condition is also a KKT one.

\begin{definition}{\rm We say that $\bar x\in\mathcal{F}$ satisfies the {\em   KKT optimality condition} iff there exists a multiplier $(\lambda, \mu, \tau)$  in $\mathbb{R}^p_+\times\mathbb{R}^m_+\times\mathbb{R}^r$ such that
		\begin{enumerate}
			\item [(i)] $\lambda \geq 0$,
			\item [(ii)] $0\in \displaystyle\sum_{l=1}^{p}\lambda_l\partial f_l(\bar x)+\sum_{i=1}^{m}\mu_i\partial g_i(\bar x)+\sum_{j=1}^{r}\tau_j[\partial h_j(\bar x)\cup \partial (-h_j)(\bar x)]$,
			\item [(iii)] $\mu_i g_i(\bar x)=0$, $i\in \mathcal{I}$.
		\end{enumerate}
		
	}
\end{definition}

\begin{definition}\label{QNCQ-def}{\rm We say that $\bar x\in\mathcal{F}$ satisfies the {\em  quasi-normality constraint qualification} (QNCQ) if there is not any multiplier $(\mu, \tau)\in \mathbb{R}^m_+\times\mathbb{R}^r$ satisfying
\begin{enumerate}
	\item [(i)] $(\mu, \tau)\neq 0$,
	\item [(ii)] $0\in \displaystyle\sum_{i=1}^{m}\mu_i\partial g_i(\bar x)+\sum_{j=1}^{r}\tau_j[\partial h_j(\bar x)\cup \partial (-h_j)(\bar x)]$,
	\item [(iii)] in every neighborhood of $\bar x$ there is a point $x\in\mathbb{R}^n$ such that $g_i(x)>0$ for all $i$ having $\mu_i> 0$, and $\tau_jh_j(x)>0$ for all $j$ having $\tau_j\neq 0$.
\end{enumerate}		
		}	
\end{definition}

\begin{theorem}\label{KKT-theorem} Let $\bar x\in\mathcal{F}$ be such that  conditions AKKT and (E1) are satisfied with respect to sequences $\{x^k\}$ and $\{(\lambda^k, \mu^k, \tau^k)\}$. If the QNCQ holds  at $\bar x$, then so does the KKT optimality condition.
\end{theorem}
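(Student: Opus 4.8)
\emph{Strategy.} The plan is to turn the approximate condition (A1) into a genuine inclusion at $\bar x$ by a limiting argument, using that (A2) already bounds $\{\lambda^k\}$ and that QNCQ together with (E1) will force the constraint multipliers $\{(\mu^k,\tau^k)\}$ to be bounded too. Once every multiplier sequence is bounded, the closedness (Proposition~\ref{robustness}) and local boundedness (Proposition~\ref{compactness}) of the Mordukhovich subdifferential deliver the KKT multiplier directly. First I would make (A1) concrete. For $k$ large, $x^k$ lies in a ball $B(\bar x,\delta)$ on which $f_l,g_i,h_j$ are Lipschitz, so $\partial f_l(x^k)$, $\partial g_i(x^k)$, $\partial h_j(x^k)\cup\partial(-h_j)(x^k)$ are nonempty, compact, and contained in a fixed ball (Proposition~\ref{compactness}). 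Hence the infimum defining $\mathfrak m(x^k;\lambda^k,\mu^k,\tau^k)$ is attained: there are $\xi_l^k\in\partial f_l(x^k)$, $\eta_i^k\in\partial g_i(x^k)$, $\gamma_j^k\in\partial h_j(x^k)\cup\partial(-h_j)(x^k)$ with
\[
s_k:=\sum_{l=1}^{p}\lambda_l^k\xi_l^k+\sum_{i=1}^{m}\mu_i^k\eta_i^k+\sum_{j=1}^{r}\tau_j^k\gamma_j^k,\qquad \|s_k\|=\mathfrak m(x^k;\lambda^k,\mu^k,\tau^k)\to0 .
\]
By (A2), $\{\lambda^k\}$ is bounded; after passing to a subsequence I take $\lambda^k\to\lambda$ with $\lambda_l\geqq0$, $\sum_l\lambda_l=1$ (so $\lambda\geq0$), and I may also assume that for each $j$ either $\gamma_j^k\in\partial h_j(x^k)$ for all $k$ or $\gamma_j^k\in\partial(-h_j)(x^k)$ for all $k$. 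It then remains to analyse $\{(\mu^k,\tau^k)\}$, and I split into two cases.

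\emph{Bounded case.} If $\{(\mu^k,\tau^k)\}$ is bounded, I pass to a further subsequence so that $(\mu^k,\tau^k)\to(\mu,\tau)$ with $\mu\in\mathbb{R}^m_+$ and $\tau\in\mathbb{R}^r_+$ (using (E1)), and $\xi_l^k\to\xi_l$, $\eta_i^k\to\eta_i$, $\gamma_j^k\to\gamma_j$. By Proposition~\ref{robustness} and $x^k\to\bar x$ one gets $\xi_l\in\partial f_l(\bar x)$, $\eta_i\in\partial g_i(\bar x)$, $\gamma_j\in\partial h_j(\bar x)\cup\partial(-h_j)(\bar x)$, and letting $k\to\infty$ in $\|s_k\|\to0$ gives
\[
0=\sum_{l=1}^{p}\lambda_l\xi_l+\sum_{i=1}^{m}\mu_i\eta_i+\sum_{j=1}^{r}\tau_j\gamma_j\in\sum_{l=1}^{p}\lambda_l\partial f_l(\bar x)+\sum_{i=1}^{m}\mu_i\partial g_i(\bar x)+\sum_{j=1}^{r}\tau_j[\partial h_j(\bar x)\cup\partial(-h_j)(\bar x)] ,
\]
which is KKT(ii); KKT(i) is $\lambda\geq0$. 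For the complementarity KKT(iii): if $g_i(\bar x)<0$ then (A3) gives $\mu_i^k=0$ for large $k$, hence $\mu_i=0$; if $g_i(\bar x)=0$ it is automatic; feasibility of $\bar x$ excludes $g_i(\bar x)>0$. So $\mu_ig_i(\bar x)=0$ for all $i$, and the KKT condition holds.

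\emph{Unbounded case (the main obstacle).} The crux is to show this case is impossible, and this is precisely where QNCQ and (E1) are used. Suppose $M_k:=\|(\mu^k,\tau^k)\|\to\infty$ along a subsequence. Dividing $s_k$ by $M_k$, the term $\frac{1}{M_k}\sum_l\lambda_l^k\xi_l^k$ tends to $0$ (the $\lambda_l^k\xi_l^k$ being bounded while $M_k\to\infty$), and $\frac{1}{M_k}s_k\to0$, so
\[
\sum_{i=1}^{m}\frac{\mu_i^k}{M_k}\eta_i^k+\sum_{j=1}^{r}\frac{\tau_j^k}{M_k}\gamma_j^k\to0 .
\]
The normalized vectors $(\mu^k,\tau^k)/M_k$ have unit norm; passing to a subsequence they converge to some $(\tilde\mu,\tilde\tau)$ with $\|(\tilde\mu,\tilde\tau)\|=1$, so $(\tilde\mu,\tilde\tau)\neq0$ (QNCQ(i)), and $\tilde\mu\in\mathbb{R}^m_+$, $\tilde\tau\in\mathbb{R}^r_+$ by (E1). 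With $\eta_i^k\to\eta_i\in\partial g_i(\bar x)$ and $\gamma_j^k\to\gamma_j\in\partial h_j(\bar x)\cup\partial(-h_j)(\bar x)$ (Proposition~\ref{robustness}), the limit yields $0\in\sum_i\tilde\mu_i\partial g_i(\bar x)+\sum_j\tilde\tau_j[\partial h_j(\bar x)\cup\partial(-h_j)(\bar x)]$, i.e. QNCQ(ii). Finally, to contradict QNCQ(iii) I would use $x^k\to\bar x$ and (E1): if $\tilde\mu_i>0$ then $\mu_i^k>0$ for large $k$, whence $\max(g_i(x^k),0)>0$ and $g_i(x^k)>0$; if $\tilde\tau_j\neq0$ then (as $\tilde\tau_j\geqq0$) $\tilde\tau_j>0$, so $\tau_j^k>0$ for large $k$, whence $h_j(x^k)>0$ and $\tilde\tau_j h_j(x^k)>0$. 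Since there are finitely many indices, a single $x^k$ (for $k$ large, hence in any prescribed neighborhood of $\bar x$) witnesses QNCQ(iii). Thus $(\tilde\mu,\tilde\tau)$ satisfies all three conditions that QNCQ forbids — a contradiction. Hence $\{(\mu^k,\tau^k)\}$ must be bounded, and the bounded case above completes the proof.
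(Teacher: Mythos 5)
Your proof is correct and is essentially the paper's argument in a different packaging: the paper normalizes the whole multiplier vector by $\delta_k=\|(\lambda^k,\mu^k,\tau^k)\|$ and uses (E1) together with QNCQ to show the normalized limit has $\lambda\neq 0$, which is precisely your bounded/unbounded dichotomy, since $\lambda=0$ in that normalized limit occurs exactly when $\|(\mu^k,\tau^k)\|\to\infty$. The ingredients coincide --- attainment of the infimum in $\mathfrak{m}$ via Proposition~\ref{compactness}, passage to limiting subgradients at $\bar x$ via Proposition~\ref{robustness}, and violation of QNCQ(i)--(iii) through (E1) with the points $x^k$ as witnesses --- so the two proofs agree in substance.
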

\begin{proof} For each $k\in\mathbb{N}$, put $\delta_k=\|(\lambda^k, \mu^k, \tau^k)\|$. By condition (A2),  we have
\begin{equation}\label{equ-11}
\delta_k \geqq \bigg(\sum_{l=1}^{p}(\lambda_l^k)^2\bigg)^{\frac{1}{2}} \geqq\frac{1}{\sqrt{p}}>0.
\end{equation}
Since $\left\|\frac{1}{\delta_k}(\lambda^k, \mu^k, \tau^k)\right\|=1$ for all $k\in\mathbb{N}$, we may assume that the sequence $\left\{\frac{1}{\delta_k}(\lambda^k, \mu^k, \tau^k)\right\}$ converges to $(\lambda, \mu, \tau)\in (\mathbb{R}^p_+\times\mathbb{R}^m_+\times\mathbb{R}^r)\setminus\{0\}$ as $k$ tends to infinity. By condition (A0) and Proposition \ref{compactness}, for  $k$ large enough, the sets $\partial f_l(x^k)$, $\partial g_i(x^k)$ and $[\partial h_j(x^k)\cup\partial(-h_j)(x^k)]$ are compact. Thus, there exist $\xi_l^k\in \partial f_l(x^k)$, $\eta_i^k\in \partial g_i(x^k)$ and $\gamma_j^k\in [\partial h_j(x^k)\cup\partial(-h_j)(x^k)]$ such that
\begin{equation}\label{equa-10}
\mathfrak{m}(x^k; \lambda^k, \mu^k, \tau^k)=\bigg\|\sum_{l=1}^{p}\lambda_l^k\xi^k_l+\sum_{i=1}^{m}\mu^k_i\eta^k_i+\sum_{j=1}^{r}\tau^k_j\gamma_j^k\bigg\|
\end{equation}
for   $k$ large enough.  Since $f_l, g_i$ and $h_j$ are locally Lipschitz around $\bar x$, without any loss of generality, we may assume that these functions are  locally Lipschitz around $\bar x$ with the same modulus $L$. Again by condition (A0) and Proposition \ref{compactness}, for $k$ large enough, one has $ (\xi_l^k, \eta_i^k, \gamma_j^k)\in L \mathbb{B}^n\times L\mathbb{B}^n\times L\mathbb{B}^n.$ By replacing  $\{(\xi_l^k, \eta_i^k, \gamma_j^k)\}$ by a subsequence if necessary, we may assume that this sequence converges to some $(\xi_l, \eta_i, \gamma_j)\in \mathbb{R}^n\times\mathbb{R}^n\times\mathbb{R}^n$. By Proposition \ref{robustness}, we have
\begin{equation*}
(\xi_l, \eta_i, \gamma_j)\in \partial f_l(\bar x)\times\partial g_i(\bar x)\times [\partial h_j(\bar x)\cup \partial(-h_j)(\bar x)].
\end{equation*}
From conditions (A1) and \eqref{equ-11}, dividing the both sides of \eqref{equa-10} by $\delta_k$ and taking the limits, we have
\begin{equation*}
\sum_{l=1}^{p}\lambda_l\xi_l+\sum_{i=1}^{m}\mu_i\eta_i+\sum_{j=1}^{r}\tau_j\gamma_j=0.
\end{equation*}
Thanks to condition (A3), one has $\mu_i g_i(\bar x)=0 \ \ \text{for all} \ \ i\in \mathcal{I}$. We claim that $\lambda\neq 0$. Indeed, if otherwise, one has $(\mu, \tau)\neq 0$ and
\begin{equation*}
\sum_{i=1}^{m}\mu_i\eta_i+\sum_{j=1}^{r}\tau_j\gamma_j=0.
\end{equation*}
By condition \eqref{equ-11} and $\mu_i^k\to \mu_i$ as $k\to\infty$, we see that if $\mu_i>0$, then $\mu_i^k>0$ for $k$ large enough. Hence, due to condition (E1), we obtain  $g_i(x^k)>0$ for all $k$ large enough. Similarly, if $\tau_j\neq 0$, then $\tau_j h_j(x^k)>0$ for $k$ large enough. Thus, the multiplier $(\mu, \tau)$ satisfies conditions (i)--(iii) in Definition \ref{QNCQ-def}, contrary to the fact that $\bar x$ satisfies the QNCQ. The proof is complete. $\hfill\Box$
\end{proof}

\medskip
We finish this section with the following remarks.

\begin{remark}
	\begin{enumerate}
		\item [(i)]  It is well known that if $\bar x$ is a weak efficient solution of \eqref{problem} and satisfies  the QNCQ, then the KKT condition holds at this point; see \cite[Theorem 3.3]{Chuong14}. This fact may not hold if $\bar x$ is not a  weak efficient solution.
		
		\item[(ii)] If condition (E1) does not hold, then the AKKT condition and the QNCQ do not guarantee the correctness of KKT optimality conditions even for smooth scalar optimization problems; see \cite[Example 4]{Andreani16} for more details.
		
		\item [(iii)]  Analysis similar to that in the proof of Theorem \ref{KKT-theorem} shows that  Theorems 4.1--4.5 in \cite{Giorgi16}   can be extended to multiobjective optimization problems with locally Lipschitz data. We leave the details to the reader.
	\end{enumerate}
\end{remark}

\section*{Acknowledgments}{The authors would like to thank the referees for their constructive comments which significantly improve the presentation of the paper. J.-C. Yao and C.-F. Wen are supported by the Taiwan MOST [grant number 106-2923-E-039-001-MY3], [grant number 106-2115-M-037-001],
	respectively, as well as the grant from Research Center for Nonlinear
	Analysis and Optimization, Kaohsiung Medical University, Taiwan.}



\begin{thebibliography}{}
	\bibitem{Andreani2007}
	Andreani, R., Birgin,  E.G., Mart\'inez, J.M., Schuverdt, M.L.: On augmented Lagrangian methods with general lower-level constraints. SIAM J. Optim. {\bf 18}, 1286--1309 (2007)
	
	\bibitem{Andreani2010}
	Andreani, R., Mart\'inez, J.M., Svaiter, B.F.: A new sequential optimality condition for constrained optimization and algorithmic consequences. SIAM J. Optim. {\bf 20}, 3533--3554 (2010)
	
	\bibitem{Andreani2011}
	Andreani, R., Haeser, G., Mart\'inez, J.M.: On sequential optimality conditions for smooth constrained optimization. Optimization {\bf 60}, 627--641 (2011)
	
	\bibitem{Andreani16}
	Andreani, R., Mart\'inez, J.M.,  Ramos, A., Silva, P.J.S.: A cone-continuity constraint qualification and algorithmic consequences. SIAM J. Optim. {\bf 26}, 96--110 (2016)
	
	\bibitem{Birgin2014} Birgin, E.G., Mart\'inez, J.M.: Practical Augmented Lagrangian Methods for Constrained Optimization. In: Nicholas, J.H. (eds.) Fundamental of Algorithms, pp. 1--220. SIAM Publications, Philadelphia (2014)
	
	\bibitem{Borwein05}
	Borwein, J.M., Zhu, Q.J.: Techniques of Variational Analysis. Springer, New York (2005)
	
	\bibitem{Chuong14} Chuong, T.D., Kim, D.S.: Optimality conditions and duality in nonsmooth multiobjective optimization problems. Ann. Oper. Res. {\bf 217}, 117--136  (2014)
	
	\bibitem{Dutta2013}
	Dutta, J., Deb, K., Tulshyan, R., Arora, R.: Approximate KKT points and a proximity measure for termination. J. Glob. Optim. {\bf 56}, 1463--1499 (2013)
	
	\bibitem{Giorgi16}
	Giorgi, G., Jim\'enez, B., Novo, V.: Approximate Karush--Kuhn--Tucker condition in multiobjective optimization. J. Optim. Theory Appl. {\bf 171}, 70--89 (2016)
	
	\bibitem{Haeser11}
	Haeser, G., Schuverdt, M.L.: On approximate KKT condition and its extension to continuous variational inequalities. J. Optim. Theory Appl. {\bf 149}, 528--539 (2011)
		
	\bibitem{Haeser2015}
	Haeser, G., Melo, V.V.: Convergence detection for optimization algorithms: approximate-KKT stopping criterion when Lagrange multipliers are not available. Oper. Res. Lett. {\bf 43}, 484--488 (2015)
	
	\bibitem{Mangasarian69} Mangasarian, O.L.: Nonlinear Programming.  McGraw-Hill, New York (1969)
	
	\bibitem{mor06}
	Mordukhovich, B.S.: Variational Analysis and Generalized Differentiation I. Basis Theory. Springer, Berlin (2006)
	
	\bibitem{Penot13}
	Penot, J.P.: Calculus Without Derivatives. In: Sheldon, A., Kenneth, R. (eds.)  Graduate Texts in Mathematics, vol. 266, pp. 1--524. Springer, New York (2013)
	
	
	
	
	
	
	
	
	
	
\end{thebibliography}
\end{document}